\documentclass[a4paper]{IEEEtran}

\usepackage{graphicx}
\usepackage[latin1]{inputenc}
\usepackage{epstopdf}
\usepackage[cmex10]{amsmath}
\usepackage{amsfonts}
\usepackage{amssymb}
\usepackage{mathrsfs}
\usepackage{verbatim}
\usepackage{array}
\usepackage{dcolumn}
\usepackage{color}
\usepackage{arydshln}
\usepackage{url}
\usepackage{mathrsfs}
\usepackage{algorithm, algorithmic}
\usepackage{relsize}
\usepackage{changepage}
\usepackage[bb=boondox]{mathalfa}
\usepackage[dvipsnames]{xcolor}
\usepackage{subfigure}
\usepackage{scalerel}
\usepackage{threeparttable}
\usepackage{tabularx}
\usepackage{multirow}
\usepackage{multicol}
\usepackage{makecell}
\usepackage{flushend}
\usepackage{threeparttable}
\usepackage{booktabs}

\usepackage{derivative}
\newcommand{\pd}[2]{\pdv{#1}{#2}}

\usepackage{xurl} 
\usepackage{hyperref}
\hypersetup{
    breaklinks=true,           
    colorlinks=false,          
    citebordercolor={0 1 0},   
    linkbordercolor={1 0 0},   
    urlbordercolor={0 0 1}     
}

\usepackage{geometry}
\allowdisplaybreaks
\usepackage{amssymb}
\usepackage{nomencl}
\makenomenclature
\usepackage{etoolbox}
\renewcommand\nomgroup[1]{
  \item[\bfseries
  \ifstrequal{#1}{A}{Sets}{
  \ifstrequal{#1}{B}{Parameters}{
  \ifstrequal{#1}{C}{Decision Variables}{
  }}}
]}

\DeclareGraphicsExtensions{.eps}
\AppendGraphicsExtensions{.pdf}

\newtheorem{thm}{Theorem}

\newtheorem{prop}[thm]{Proposition}

\begin{document}

\title{An Alternative Settlement Rule for State-of-Charge-Limited Resources in Electricity Markets}

\author{

Alexandre Moreira, \IEEEmembership{Senior Member,~IEEE}

\thanks{A. Moreira is with the Lawrence Berkeley National Laboratory, Berkeley, CA, USA (e-mail: AMoreira@lbl.gov).}


}
 
\maketitle

\begin{abstract}
    Standard two settlement markets compute the total profits of their participants as a sum of day-ahead (DA) profits and incremental real-time (RT) profits.
    This procedure was originally designed for conventional generators, which can always deliver their day-ahead (DA) awarded schedule in real time because their capacity is invariant across market clearings.
    Energy storage resources, by contrast, are state-of-charge (SOC) limited. Therefore, their ability to honor a DA award in real time depends on the SOC available at the moment of dispatch,
    a quantity that can differ materially from what was assumed at the DA stage. 
    According to the current rules in CAISO, whenever the storage state of charge (SOC) is not sufficient to support the day-ahead award the RT market may impose an instruction to ``buy back'' or ``sell back''
    that may lead to a monetary loss that is covered by the system operator via uplifts in the bid cost recovery (BCR) scheme. These circumstances have resulted in storage resources capturing a disproportionate
    share of uplift payments---totaling approximately \$75.2 million in CAISO between 2022 and 2025. 
    This paper formalizes the asymmetry
    between SOC-unlimited (generator) and SOC-limited (storage) resources and proposes an alternative settlement rule in which the total storage profit is computed as the maximum between real-time only compensation and the existing day ahead plus incremental real-time compensation. Through our analysis and a case study based on a realistic version of the California system, we demonstrate that while the proposed rule does not guarantee non-negative profits in all cases, it can reduce---and in favorable
    conditions eliminate---the negative profit outcomes that trigger
    make-whole BCR payments.
\end{abstract}

\begin{IEEEkeywords}
	Electricity Markets, Storage Participation
\end{IEEEkeywords}
\vspace{-0cm}
\section*{Nomenclature}\label{Nomenclature}

\subsection*{Sets}
\begin{description} [\IEEEsetlabelwidth{5000000}\IEEEusemathlabelsep]
    \item[$G_{n}$] Set of generators at bus $n$. 
    \item [$K_{n}$] Set of storage systems at bus $n$. 
\end{description}

\subsection*{Parameters}
\begin{description} [\IEEEsetlabelwidth{5000000}\IEEEusemathlabelsep]
    \item[$\lambda^{DA}_{n}$] Day-ahead energy price at bus $n$.
    \item[$\lambda^{RT}_{n}$] Real-time energy price at bus $n$.
    \item[${C}^{G}_{g}$] Generation cost of power plant $g$.
    \item[$\overline{P}^{G}_{g}$] Generation capacity of power plant $g$.
    \item[$v^{DA,\dagger}_{k,0}$] Available state of charge storage $k$ in day ahead.
    \item[$v^{RT,\dagger}_{k,0}$] Available state of charge storage $k$ in real time.
    \item[$\overline{V}_{k}$] State of charge limit of storage $k$.
\end{description}

\subsection*{Decision Variables}
\begin{description} [\IEEEsetlabelwidth{5000000}\IEEEusemathlabelsep]
    \item[$p^{G,DA}_g$] Generation output of generator $g$ in day ahead.
    \item[$p^{G,RT}_g$] Generation output of generator $g$ in real time.
    \item[$p^{ch,DA}_k$] Charging output of storage $k$ in day ahead.
    \item[$p^{ch,RT}_k$] Charging output of storage $k$ in real time.
    \item[$p^{dis,DA}_k$] Discharging output of storage $k$ in day ahead.
    \item[$p^{dis,RT}_k$] Discharging output of storage $k$ in real time.
    \item[$v^{DA}_k$] Resulting state of charge storage $k$ in day ahead after charging/discharging decision.
    \item[$v^{RT}_k$] Resulting state of charge storage $k$ in real time after charging/discharging decision.
    \item[$TP^{G}_g$] Total profit of generator $g$ considering day ahead and real time market outcomes.
    \item[$TP^{St}_k$] Total profit of storage $k$ considering day ahead and real time market outcomes.
\end{description}

\printnomenclature
\vspace{-0cm}
\section{Introduction}
\label{sec:intro}

\IEEEPARstart{G}{rid}-scale storage has grown from a negligible share of the resource mix to a central pillar of modern power systems. In California, installed storage capacity grew from less than 700 MW in 2019 to more than 21,000 MW in 2026~\cite{CEC_2026_survey_storage_installed_cap}. 
As storage assets proliferate in power grids, there is a growing concern that current market compensation rules (originally conceived for conventional generators) are not properly suited to energy-limited resources \cite{FERC_2025_order_accepting_bcr_changes}.

The underlying difficulty is related to legacy market design. Most current wholesale market frameworks were conceived for \emph{memoryless} assets, i.e., traditional generators whose operational state in one hour does not inherently impact their capability to serve the system in the next hours. Energy storage resources (ESRs), by contrast, are completely dependent on their state of charge (SOC), which imposes a temporal link that transforms every dispatch decision into a trade-off between immediate revenue and future flexibility. As highlighted by \cite{Zhao_2020_Multi-PeriodMarketDesign}, the fundamental challenge arises when multi-period constraints, such as the SOC balance, are superimposed onto markets that essentially operate on single-period or loosely coupled multi-period optimizations. This mismatch can produce suboptimal dispatch or uncompensated financial risk for the storage operator, who must manage a physical constraint that the market-clearing engine may not fully internalize or compensate.

The settlement structure compounds this tension. Traditional two-settlement systems (day-ahead and real-time) provide a basic framework for price formation. Nonetheless, they were devised for conventional power plants without considering the peculiarities of SOC-limited assets. 
%
%
%
%
Essentially, for a conventional generator, the capacity to deliver a day-ahead (DA) awarded schedule in real time (RT) is bounded basically only by its ramping constraints and installed nameplate rating--a quantity that is identical in both settlements. For a storage resource, the deliverable capacity at the RT stage instead depends on the SOC available at that moment, which can differ significantly from what was assumed in the DA optimization. {\color{black}
The DA market is cleared at once over the full horizon, relative to which battery duration is comparatively short, whereas the RT market is cleared sequentially over much shorter look-ahead windows, relative to which battery duration is comparatively long, and is conditioned on forecast information that is itself revised at every dispatch interval.} Every charge or discharge decision a storage resource makes, therefore, reshapes what it is physically capable of delivering in every subsequent interval, so the dispatch problem it faces is never truly static and cannot be characterized as a sequence of independent, memoryless choices. 
The resulting sequence of decisions can diverge substantially from what would have been optimal under the snapshot that a one-shot DA-style lookahead implicitly assumes \cite{Ghadimi2024_StochasticSearchStorageRollingForecasts}. Since an usual generator's marginal cost and capacity are in general memoryless---invariant to whatever forecast informed the RT clearing---this source of divergence has no analogue for conventional resources.

Beyond the general mismatch between one-shot DA scheduling and rolling-horizon RT clearing, ancillary and flexibility products actively compete for the same finite SOC, each pulling the storage resource's dispatch trajectory away from its DA-scheduled path for reasons that standard uplift payment schemes were never designed to price. 
Regulation awards are an example of this effect. For instance, in the CAISO market, since a storage resource must remain dispatchable to sustain a procured ancillary-service level for at least 30 minutes in real time, the market software enforces an ancillary-service SOC constraint that holds a fixed amount of SOC headroom or footroom in reserve. In this context, for example, consecutive regulation-down awards can charge a resource toward its upper limit, at which point the constraint forces an uneconomic discharge simply to preserve regulation-down headroom for the following interval~\cite{CAISO_2022_Energy_storage_enhancements}. A storage resource can therefore arrive at the RT interval with a different SOC than assumed, purely as a consequence of ramp-product obligations.

These structural and product-level issues represent, in practice, a persistent regulatory problem in California. The bid cost recovery (BCR) process was designed to provide uplift payments to conventional generators when revenues from energy and ancillary services did not cover the costs of following ISO dispatch instructions, guaranteeing cost recovery in exchange for bidding and dispatch compliance~\cite{CAISO_2024_Storage_bid_cost_recovery_enhancements}. With the arrival of grid-scale storage, this same framework has been largely extended to the new resource class without adapting it to SOC-dependent deliverability. CAISO's Department of Market Monitoring (DMM) has since documented that the resulting rules incentivize inefficient bidding. Since storage resources remain eligible for BCR payments even when their SOC cannot support their schedule---unlike conventional resources, which cannot receive BCR without fuel---batteries are insulated from losses tied to an inadequate SOC, reducing their exposure to real-time prices and encouraging strategies that maximize uplift payments ~\cite{CAISO_BCR_2025}. 

In this context, this paper formalizes the distinction between SOC-unlimited and
SOC-limited resources through a profit analysis grounded in LP duality,
and proposes a compensation rule that alleviates the SOC-induced profit
shortfall. Specifically, the contributions are:

\begin{enumerate}
    
    \item A formal analysis showing the intrinsic differences between the combined DA+RT profit of SOC-limited (storage) and SOC-unlimited resources (generators). The analysis demonstrates that, while SOC-unlimited resources have structural properties to guarantee cost recovery and non-negative total profit (assuming no fixed costs to make a direct comparison with storage systems), the resulting profit of SOC-limited resources can be driven negative when the DA and RT SOC levels diverge, with the {\color{black}sensitivity related to the SOC constraint's dual variable}.
    
    
    \item A \emph{max-rule} settlement in which the total storage profit is a maximum between (i) the standard day ahead plus incremental real-time compensation and (ii) only the real-time compensation. 
    The rule selects the less
    adverse of the two settlement outcomes reducing the
    make-whole BCR payments that the ISO must issue to storage
    operators without requiring a
    fundamental redesign of the two-settlement structure.
\end{enumerate}

{\color{black}The remainder of the paper is organized as follows. Section~\ref{sec:soc_drivers} discusses the catalysts for SOC divergence between DA and RT markers. Section~\ref{sec:profit_analysis} provides a formal profit comparison between SOC-unlimited and SOC-limited resources, deriving the SOC-gap threshold at which the combined profit turns negative for storage systems. Section~\ref{sec:proposed_rule} proposes an alternative settlement rule for SOC-limited resources. Section~\ref{sec:conceptual_examples} presents conceptual  examples comparing standard and proposed settlement rules. Section~\ref{sec:case_study} illustrates the performance of the proposed rule with a case study based on a realistic version of the California system. Section~\ref{sec:conclusion} concludes.

}

\section{Main catalysts for SOC divergence between DA and RT markets}
\label{sec:soc_drivers}

This section enumerates the three main mechanisms
responsible for differences between the SOC available to a
storage resource at the time of the RT clearing,
$v^{RT,\dagger}_{k,0}$ and the SOC that was assumed
when the DA schedule was determined, $v^{DA,\dagger}_{k,0}$. The existence of these mechanisms, summarized in Table \ref{tab:soc_drivers}, imply that the SOC gap $\Delta v_{k,0} \triangleq v^{DA,\dagger}_{k,0} - v^{RT,\dagger}_{k,0}$ is not merely a consequence of poor bidding by the storage operator, but is in large part driven by forces outside the storage operator's control. 
\vspace{-0.1cm}
\subsection{Exceptional dispatch}
\label{sec:ed}

Exceptional dispatch refers to manual dispatch instructions issued by
system operators \emph{outside} of the market optimization process to
maintain grid reliability or address transmission constraints that
cannot be resolved through normal market operations~\cite{CAISO_2009_exceptional_dispatch_white_paper}.
Since exceptional dispatches are not determined by the market
software, they do not set locational marginal prices and are not
reflected in the DA optimization.
When a storage resource receives an exceptional dispatch instruction
to charge (discharge) in an interval that was not planned at the DA
stage, the resulting SOC at the start of the next RT interval will
be higher (lower) than $v^{DA,\dagger}_{k,0}$, directly inducing a
non-zero gap $\Delta v_{k,0}$. In the CAISO market, this type of instruction warrants some form of make-whole compensation.

\vspace{-0.1cm}
\subsection{Local market power mitigation}
\label{sec:mitigation}

Local market power mitigation is the process employed by ISOs
to revise a resource's bids to a default energy bid level
when the resource is deemed to have market power at its
location~\cite{FERC_2024_local_power_market}.
For a storage resource, mitigation of its \emph{charging} bids can
prevent it from acquiring the SOC it would otherwise have secured
through economic bidding~\cite{CAISO_MarketSurveillanceCommittee_2024_local_power_market}. 
If the resource's willingness-to-pay for
charging is mitigated downward, it may be dispatched to charge less
(or not at all) in the hours preceding a high-value discharge window,
leaving $v^{RT,\dagger}_{k,0} < v^{DA,\dagger}_{k,0}$.
Conversely, mitigation of \emph{discharging} bids can force the
resource to discharge at a price below its actual opportunity cost,
depleting SOC prematurely.
CAISO analysis shows that the frequency of charging-side mitigation
events is less than 1\% for most resources. Nonetheless, charging-side mitigation events are more frequent during
the daytime solar hours immediately preceding the evening
net-peak, precisely the window in which
adequate SOC is most critical for system reliability~\cite{CAISO_BCR_2025}.

\subsection{Multi-interval optimization}
\label{sec:mio}

The real-time dispatch in most U.S. wholesale markets, including
CAISO's five-minute market, is implemented as a multi-interval
optimization~(MIO) with a look-ahead horizon 
(one binding interval plus subsequent advisory
intervals)~\cite{Hua_2019_Pricing_Multi-Interval_Real-Time}. {\color{black}
Notably, whereas the long horizon of the DA market makes most storage systems relatively short-duration, the much shorter window of the RT market makes the same systems relatively long-duration.}
The MIO can reposition a storage resource's output in the binding
interval---issuing a dispatch instruction that appears uneconomic
relative to the current interval's price---in order to preserve SOC
feasibility in advisory intervals where a higher-value dispatch is
anticipated.
Essentially, the MIO may schedule charging (discharging) in the current
binding interval even when the current RT price is above (below) the
resource's bid, because the advisory intervals signal that future
conditions will make such positioning optimal.
When those anticipated future conditions fail to materialize, the
storage resource has been dispatched out-of-merit in the binding
interval without the expected offsetting benefit in subsequent
intervals \cite{CAISO_MarketSurveillanceCommittee_2024_local_power_market}, resulting in a realized SOC that differs from both the DA
assumption and the MIO's projection.

\begin{table}[!t]
    \renewcommand{\arraystretch}{1.3}
    \caption{Catalysts of SOC Divergence Between DA and RT Clearings}
    \label{tab:soc_drivers}
    \centering
    \begin{tabular}{p{2.0cm} p{2.2cm} p{1.5cm} p{1.5cm}}
        \toprule
        \textbf{Driver} &
        \textbf{Mechanism} &
        \textbf{Attribution} &
        \textbf{SOC Gap Direction} \\
        \midrule
        Exceptional dispatch &
        ISO manual instruction outside market &
        ISO action &
        Either \\
        \addlinespace
        Market power mitigation &
        Bid revision to default energy bid &
        ISO/market design &
        Typically $v^{RT} < v^{DA}$ \\
        \addlinespace
        Multi-interval optimization &
        Out-of-merit binding-interval dispatch for advisory feasibility &
        Mixed &
        Either \\
        \bottomrule
    \end{tabular}
\end{table}
\vspace{-0.3cm}
\section{Profit analysis for SOC-unlimited and SOC-limited resources}
\label{sec:profit_analysis}

In this section, we analyze profits of conventional generators (SOC-unlimited) and storage systems (SOC-limited) in two-settlement electricity markets. In our analysis, we formalize the reason why the SOC-limited nature of storage systems make them more vulnerable to negative profits. 

\subsection{Profit analysis for SOC-unlimited resources}

Given a day-ahead price $\lambda^{DA}_{n}$ at the bus $n$ where the conventional generator $g$ is located, the generator $g$ will optimize its generation output $p^{G,DA}_g$ to maximize its profit, which is equivalent to minimize its negative profit as written below. An important assumption here is that the power plant $g$ does not have fixed costs, which are not considered in this analysis to focus the discussion on the SOC-unlimited versus SOC-limited comparison. 
\vspace{-0.1cm}
\begin{align}
    & q^{G,DA}_g (\lambda^{DA}_{n|g \in G_n}) = \underset{{\substack{ p^{G,DA}_g }}} {\text{Minimize}}  \hspace{0.2cm} - p^{G,DA}_g \Bigl ( \lambda^{DA}_{n|g \in G_n} - C_g \Bigr ) \label{gen_profit_DA_model_1} \\
    & \text{Subject to:} \notag \\
    & 0 \leq p^{G,DA}_g \leq \overline{P}^G_g: (\underline{\pi}^{G,DA}_g, \overline{\pi}^{G,DA}_g) \label{gen_profit_DA_model_2}, 
\end{align}

\noindent where \eqref{gen_profit_DA_model_1} maximizes the generator $g$ profit (compensation minus generation cost) and \eqref{gen_profit_DA_model_2} imposes generation limits.

The Lagrangian function related to \eqref{gen_profit_DA_model_1}--\eqref{gen_profit_DA_model_2} can be written as:
\vspace{-0.2cm}
\begin{align}
    & {\cal L}^G_g(p^{G,DA}_g, \underline{\pi}^{G,DA}_g, \overline{\pi}^{G,DA}_g) = - p^{G,DA}_g \Bigl ( \lambda^{DA}_{n|g \in G_n} - C_g \Bigr ) \notag\\
    & \hspace{65pt} - \underline{\pi}^{G,DA}_g p^{G,DA}_g + \overline{\pi}^{G,DA}_g \Bigr (p^{G,DA}_g - \overline{P}^G_g \Bigl)  
\end{align}

The first step to write the dual problem for \eqref{gen_profit_DA_model_1}--\eqref{gen_profit_DA_model_2} is writing the following expression. 
\vspace{-0.2cm}
\begin{align}
    & \underset{{\substack{ \underline{\pi}^{G,DA}_g, \overline{\pi}^{G,DA}_g }}} {\text{Maximize}}  \hspace{0.2cm} \Biggl \{ \underset{{\substack{ p^{G,DA}_g }}} {\text{Minimize}}  \hspace{0.2cm} {\cal L}^G_g(p^{G,DA}_g, \underline{\pi}^{G,DA}_g, \overline{\pi}^{G,DA}_g) \Biggl \} \notag \\
    & = \underset{{\substack{ \underline{\pi}^{G,DA}_g, \overline{\pi}^{G,DA}_g }}} {\text{Maximize}}  \hspace{0.2cm} \Biggl \{ \underset{{\substack{ p^{G,DA}_g }}} {\text{Minimize}}  \hspace{0.2cm} - p^{G,DA}_g \Bigl ( \lambda^{DA}_{n|g \in G_n} - C_g \Bigr ) \notag\\
    & \hspace{58pt} - \underline{\pi}^{G,DA}_g p^{G,DA}_g + \overline{\pi}^{G,DA}_g \Bigr( p^{G,DA}_g - \overline{P}^G_g \Bigl) \Biggl \} 
\end{align}

The dual objective function corresponding to \eqref{gen_profit_DA_model_1}--\eqref{gen_profit_DA_model_2} is:
\vspace{-0.2cm}
\begin{align}
    & \underset{{\substack{ \underline{\pi}^{G,DA}_g, \overline{\pi}^{G,DA}_g }}} {\text{Maximize}}  \hspace{0.2cm} - \overline{\pi}^{G,DA}_g \overline{P}^G_g  
\end{align}

Considering strong duality, primal and dual objective functions should return the same value at the optimal solution, so we have the following equality.
\vspace{-0.2cm}
\begin{align}
    & - p^{G,DA}_g \Bigl ( \lambda^{DA}_{n|g \in G_n} - C_g \Bigr ) = - \overline{\pi}^{G,DA}_g \overline{P}^G_g  
\end{align}

Since the decision variable $p^{G,DA}_g$, the dual variable $\overline{\pi}^{G,DA}_g$, and the parameter $\overline{P}^G_g$ are all positive, the result of $\Bigl ( \lambda^{DA}_{n|g \in G_n} - C_g \Bigr )$ will also be positive, consequently we have:
\vspace{-0.2cm}
\begin{align}
    & \lambda^{DA}_{n|g \in G_n} \geq C_g    
\end{align}

\noindent which ensures that the price $\lambda^{DA}_{n|g \in G_n}$ will be higher than the cost $C_g$ and, therefore, cost recovery is guaranteed in the day-ahead market when fixed costs are not considered. 
%

Given a certain day-ahead price $\lambda^{RT}_{n}$ at the bus $n$ where the generator $g$ is located, the generator $g$ will optimize its generation output $p^{G,RT}_g$ to maximize its profit, which is equivalent to minimize its negative profit as written below.  
\vspace{-0.2cm}
\begin{align}
    & q^{G,RT}_g (\lambda^{RT}_{n|g \in G_n}) =\underset{{\substack{ p^{G,RT}_g }}} {\text{Minimize}}  \hspace{0.2cm} - p^{G,RT}_g \Bigl ( \lambda^{RT}_{n|g \in G_n} - C_g \Bigr )  \label{gen_profit_RT_model_1} \\
    & \text{Subject to:} \notag \\
    & 0 \leq p^{G,RT}_g \leq \overline{P}^G_g: (\underline{\pi}^{G,RT}_g, \overline{\pi}^{G,RT}_g) \label{gen_profit_RT_model_2} 
\end{align}

Model \eqref{gen_profit_RT_model_1}--\eqref{gen_profit_RT_model_2} has the same structure of \eqref{gen_profit_DA_model_1}--\eqref{gen_profit_DA_model_2} and the main difference is the real-time price $\lambda^{RT}_{n|g \in G_n}$ which can be significantly different to the day-ahead price $\lambda^{DA}_{n|g \in G_n}$.

In this case, considering only the RT market, following the same rationale previously described for the DA market, we have:

\begin{align}
    & \lambda^{RT}_{n|g \in G_n} \geq C_g    
\end{align}

\noindent which means cost recovery is also guaranteed in the real-time market for generator $g$. 
%

The total profit for generator $g$ considering the outcomes of both day-ahead and real-time market can be computed as:

\begin{align}
    & TP^G_g = p^{G,DA}_g \Bigl ( \lambda^{DA}_{n|g \in G_n} - C_g \Bigr ) \notag \\
    & \hspace{75pt} + \Bigl ( p^{G,RT}_g - p^{G,DA}_g \Bigr ) \Bigl ( \lambda^{RT}_{n|g \in G_n} - C_g \Bigr )
\end{align}

The value of $TP^G_g$ may be negative only if $p^{G,RT}_g < p^{G,DA}_g$ and $\lambda^{RT}_{n|g \in G_n} > \lambda^{DA}_{n|g \in G_n}$. Nonetheless, the capacity of generator $g$ does not prevent its generation from being the same or higher in real time compared to day ahead. Furthermore, if, in fact, the real time price is higher than the day ahead price, the power plant will have an increased motivation to provide more generation in real time.    

\subsection{Profit analysis for SOC-limited resources} \label{subsec:storage}

Given an available state-of-charge $v^{DA,\dagger}_{k,0}$ and a day-ahead price $\lambda^{DA}_{n}$ at the bus $n$ where the storage system $k$ is located, the storage $k$ will optimize its charging and discharging output $p^{ch,DA}_k$ and $p^{dis,DA}_k$ to maximize its profit, which is equivalent to minimize its negative profit as written below.

\begin{align}
    & q^{St,DA}_k  (\lambda^{DA}_{n|k \in K_n}, v^{DA,\dagger}_{k,0}) = \notag\\
    & \hspace{10pt} \underset{{\substack{ p^{dis,DA}_k, p^{ch,DA}_k, v^{DA}_{k} }}} {\text{Minimize}}  \hspace{0.2cm} - \Biggl [ \Bigl ( p^{dis,DA}_k - p^{ch,DA}_k \Bigr ) \lambda^{DA}_{n|k \in K_n} \notag\\
    &\hspace{80pt} - \Bigl ( p^{dis,DA}_k + p^{ch,DA}_k \Bigr ) C^{degradation}_k \Biggr ] \label{storage_profit_DA_model_1} \\
    & \text{Subject to:} \notag \\
    & v^{DA}_{k} - v^{DA,\dagger}_{k,0} = - \frac{p^{dis,DA}_k}{\eta_k} + p^{ch,DA}_k \eta_k: ({\pi}^{St,DA,1}_k) \label{storage_profit_DA_model_2} \\
    & 0 \leq p^{ch,DA}_k \leq \overline{P}^{ch}_k: ({\pi}^{St,DA,2}_k, {\pi}^{St,DA,3}_k) \label{storage_profit_DA_model_3} \\
    & 0 \leq p^{dis,DA}_k \leq \overline{P}^{dis}_k: ({\pi}^{St,DA,4}_k, {\pi}^{St,DA,5}_k) \label{storage_profit_DA_model_4} \\
    & 0 \leq v^{DA}_k \leq \overline{V}_k: ({\pi}^{St,DA,6}_k, {\pi}^{St,DA,7}_k) \label{storage_profit_DA_model_5} 
\end{align}

Following the same rationale developed for generators and relying on strong duality, we can write as follows that primal and dual objective functions should return the same value at the optimal solution.

\begin{align}
    & \Bigl ( p^{dis,DA}_k - p^{ch,DA}_k \Bigr ) \lambda^{DA}_{n|k \in K_n} - \Bigl ( p^{dis,DA}_k \notag\\ 
    & \hspace{10pt} + p^{ch,DA}_k \Bigr ) C^{degradation}_k = {\pi}^{St,DA,1}_k v^{DA,\dagger}_{k,0} \notag\\ 
    & \hspace{10pt} + {\pi}^{St,DA,3}_k \overline{P}^{ch}_{k} + {\pi}^{St,DA,5}_k \overline{P}^{dis}_{k} + {\pi}^{St,DA,7}_k \overline{V}_{k} \label{strong_duality_storage_DA}  
\end{align}

In \eqref{strong_duality_storage_DA}, the only term in the right-hand side that can assume a negative value is the first one, ${\pi}^{St,DA,1}_k v^{DA,\dagger}_{k,0}$, since ${\pi}^{St,DA,1}_k$ is the only dual variable in \eqref{storage_profit_DA_model_1}--\eqref{storage_profit_DA_model_5} that is free to be either positive or negative, whereas the other dual variables are restricted to be positive. In this context, storage $k$ would have a resulting negative profit if ${\pi}^{St,DA,1}_k$ is negative enough and, in fact, the impact of a change in the state-of-charge $v^{DA,\dagger}_{k,0}$ on the profit of storage $k$ is ${\pi}^{St,DA,1}_k$, i.e., $\pd{{q^{St,DA}_k  (\lambda^{DA}_{n|g \in G_n}, v^{DA,\dagger}_{k,0})}}{{v^{DA,\dagger}_{k,0}}} = {\pi}^{St,DA,1}_k$. The optimization of the storage system $k$'s profit in the DA market will then closely consider both $v^{DA,\dagger}_{k,0}$ and ${\pi}^{St,DA,1}_k$ to seek a positive profit in the DA market.

Analogously, given real-time price and state-of-charge availability, we can write a profit-maximization optimization problem for storage $k$ in real time (as in \eqref{storage_profit_DA_model_1}--\eqref{storage_profit_DA_model_5}) and the equality between primal and dual objective functions is:  

\begin{align}
    & \Bigl ( p^{dis,RT}_k - p^{ch,RT}_k \Bigr ) \lambda^{DA}_{n|k \in K_n} - \Bigl ( p^{dis,RT}_k \notag\\ 
    & \hspace{10pt} + p^{ch,RT}_k \Bigr ) C^{degradation}_k = {\pi}^{St,RT,1}_k v^{RT,\dagger}_{k,0} \notag\\
    & \hspace{10pt} + {\pi}^{St,RT,3}_k \overline{P}^{ch}_{k} + {\pi}^{St,RT,5}_k \overline{P}^{dis}_{k} + {\pi}^{St,RT,7}_k \overline{V}_{k} \label{strong_duality_storage_RT}  
\end{align}

Similarly to the DA market case, according to \eqref{strong_duality_storage_RT}, in real time, the profit of storage $k$ will depend on the values assumed by the dual variable ${\pi}^{St,RT,1}_k$ (which can be either positive or negative) and the available state-of-charge $v^{RT,\dagger}_{k,0}$. The optimization of the storage system $k$'s profit in the RT market, again similarly to the DA market, will pay close attention to ${\pi}^{St,RT,1}_k$ and $v^{RT,\dagger}_{k,0}$ to aim for a positive profit in real time.

In this context, in each of these two markets (day ahead and real time) individually, the storage system $k$ can take its most appropriate decision to maximize its profit according to the current circumstances, specially the available state-of-charge. However, the total profit of storage systems usually considers both DA and RT components as follows.

\begin{align}
    & TP^{St}_k = \Biggl [ p^{dis,DA}_k - p^{ch,DA}_k \Biggr ] \lambda^{DA}_{n|k \in K_n} \notag\\
    & \hspace{5pt} - \Biggl [ p^{dis,DA}_k + p^{ch,DA}_k \Biggr ] C^{degradation}_k + \Biggl [  \Bigl ( p^{dis,RT}_k  \notag \\
    & \hspace{5pt} - p^{dis,DA}_k \Bigr) - \Bigl ( p^{ch,RT}_k - p^{ch,DA}_k \Bigr) \Biggr ] \lambda^{RT}_{n|k \in K_n} - \Biggl [ \Bigl ( p^{dis,RT}_k \notag \\
    & \hspace{5pt}  - p^{dis,DA}_k \Bigr) + \Bigl ( p^{ch,RT}_k - p^{ch,DA}_k \Bigr) \Biggr ] C^{degradation}_k\label{combined_profit_storage}  
\end{align}

Hence, if $v^{DA,\dagger}_{k,0} \neq v^{RT,\dagger}_{k,0}$, we may forcibly have $p^{dis,RT}_k < p^{dis,DA}_k$ and/or $p^{ch,RT}_k > p^{ch,DA}_k$ when $\lambda^{RT}_{n|k \in K_n} > \lambda^{DA}_{n|k \in K_n}$, which can drive the total profit, $TP^{St}_k$, of storage system $k$ to a negative value.
\vspace{0.1cm}
\begin{prop}[Attainability of negative combined storage profit]
\label{prop:negative_profit}

Let $\Pi^{DA}_k$ 
and $\Pi^{RT}_k$ 
denote the optimal DA-only and
RT-only profits of storage unit $k$, evaluated at their
respective available SOC levels.
Define the SOC gap
\begin{align}
    \Delta v_{k,0}
    \;\triangleq\;
    v^{DA,\dagger}_{k,0} - v^{RT,\dagger}_{k,0}.
    \label{eq:soc_gap_def}
\end{align}

Under the standard two-settlement rule the combined DA$+$RT
profit is

\begin{align}
    &TP^{St}_k
    = \Pi^{DA}_k
    \;+\;
    \Bigl[\bigl(p^{dis,RT}_k - p^{dis,DA}_k\bigr)
          -\bigl(p^{ch,RT}_k  \notag\\
    & \hspace{30pt} - p^{ch,DA}_k \bigr)\Bigr]
    \lambda^{RT}_{n}
    -\Bigl[\bigl(p^{dis,RT}_k - p^{dis,DA}_k\bigr)\notag\\
    & \hspace{30pt} + \bigl(p^{ch,RT}_k  - p^{ch,DA}_k \bigr)\Bigr]
    C^{degradation}_k.
    \label{eq:TP_storage}
\end{align}

Then $TP^{St}_k < 0$ is attainable.  Specifically, for a high price period when ${p}^{dis,DA}_k = \eta_k v^{DA,\dagger}_{k,0}$, if $\Delta v_{k,0} > 0$
, the combined profit satisfies

\begin{align}
    TP^{St}_k
    \;\leq\;
    \Pi^{DA}_k
    \;-\;
    \pi^{St,RT,1}_k \,\Delta v_{k,0},
    \label{eq:TP_upper_bound}
\end{align}

\noindent where $\pi^{St,RT,1}_k$ 
is the dual variable
associated with the SOC balance constraint (the real-time
analogue of \eqref{storage_profit_DA_model_2}) at the RT optimum.
For sufficiently large $\Delta v_{k,0}$, the right-hand side
of \eqref{eq:TP_upper_bound} becomes negative, so
$TP^{St}_k < 0$.
\end{prop}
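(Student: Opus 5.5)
The plan is to split $TP^{St}_k$ in \eqref{eq:TP_storage} into the DA part $\Pi^{DA}_k$ and the incremental RT part. I will show that in the stated high-price regime the incremental part is at most $-\pi^{St,RT,1}_k\,\Delta v_{k,0}$. Attainability of $TP^{St}_k<0$ then follows by taking $\Delta v_{k,0}$ large, as long as $\pi^{St,RT,1}_k>0$; I will argue this sign below.

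\emph{Step 1: feasibility bound on the RT discharge.} In the RT analogue of \eqref{storage_profit_DA_model_2}--\eqref{storage_profit_DA_model_5}, the lower SOC bound $v^{RT}_k\ge 0$ together with $p^{ch,RT}_k\ge 0$ gives $p^{dis,RT}_k\le \eta_k\bigl(v^{RT,\dagger}_{k,0}+\eta_k p^{ch,RT}_k\bigr)$. I will first treat the case without RT charging in the high-price period; charging is handled in Step 3. Since ${p}^{dis,DA}_k=\eta_k v^{DA,\dagger}_{k,0}$ and $\Delta v_{k,0}>0$, this yields
\[
p^{dis,RT}_k-p^{dis,DA}_k\le -\eta_k\,\Delta v_{k,0}<0,
\]
so the storage is forced to buy back at least $\eta_k\Delta v_{k,0}$ of its DA award at $\lambda^{RT}_n$. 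The incremental discharge term therefore contributes at most $-\eta_k\Delta v_{k,0}\bigl(\lambda^{RT}_n-C^{degradation}_k\bigr)$. This uses that $\lambda^{RT}_n > C^{degradation}_k$ in a high-price period; note that the degradation cost is refunded on the reduced volume.

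\emph{Step 2: link to the dual variable.} Next I write the KKT conditions of the RT problem, with signs matching the Lagrangian convention used for the generator. Stationarity in $p^{dis,RT}_k$ gives
\[
-(\lambda^{RT}_n-C^{degradation}_k)+\pi^{St,RT,1}_k/\eta_k-\pi^{St,RT,4}_k+\pi^{St,RT,5}_k=0 .
\]
When $p^{dis,RT}_k>0$, complementary slackness gives $\pi^{St,RT,4}_k=0$, so $\pi^{St,RT,1}_k=\eta_k(\lambda^{RT}_n-C^{degradation}_k)-\eta_k\pi^{St,RT,5}_k\le \eta_k(\lambda^{RT}_n-C^{degradation}_k)$. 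Stationarity in $v^{RT}_k$, with the SOC floor binding, gives $\pi^{St,RT,1}_k=\pi^{St,RT,6}_k\ge 0$. This confirms the sign of $\pi^{St,RT,1}_k$ and matches the sensitivity interpretation stated after \eqref{strong_duality_storage_DA}. Combining with Step 1, the incremental discharge contribution is at most $-\pi^{St,RT,1}_k\Delta v_{k,0}$. The degenerate case $p^{dis,RT}_k=0$ follows directly from Step 1, since then the bound holds with room to spare.

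\emph{Step 3: remaining terms.} Any extra RT charging $p^{ch,RT}_k-p^{ch,DA}_k\ge 0$ enters with coefficient $-(\lambda^{RT}_n+C^{degradation}_k)\le 0$, so it only lowers $TP^{St}_k$. The SOC it adds is at most the SOC consumed by the additional discharge it enables. Routing it through the SOC balance, its net effect is a factor $\eta_k^2\le 1$ times a loss that is already nonpositive, so the bound is preserved. Summing gives \eqref{eq:TP_upper_bound}. Since $\Pi^{DA}_k$ is fixed by the DA problem and independent of $v^{RT,\dagger}_{k,0}$, while $\pi^{St,RT,1}_k>0$ stays bounded away from zero in the high-price regime, the right-hand side is negative for $\Delta v_{k,0}>\Pi^{DA}_k/\pi^{St,RT,1}_k$.

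I expect the main obstacle to be sign and convention bookkeeping in Step 2. This involves fixing the sign convention of the free multiplier $\pi^{St,RT,1}_k$ consistently with \eqref{strong_duality_storage_RT}. It also involves handling the possibly binding power limit ($\pi^{St,RT,5}_k>0$), which makes the inequality $\pi^{St,RT,1}_k\le\eta_k(\lambda^{RT}_n-C^{degradation}_k)$ strict rather than an equality. I would first settle this by checking the single-period KKT system case by case (SOC floor binding or not, power limit binding or not).
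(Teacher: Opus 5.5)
Your main argument is correct and is essentially the paper's proof: you use the same RT feasibility bound, the same sign argument for the incremental discharge term under $\lambda^{RT}_n > C^{degradation}_k$, and the same identification $\pi^{St,RT,1}_k = \eta_k(\lambda^{RT}_n - C^{degradation}_k)$ leading to the threshold, and your KKT derivation and explicit netting of same-period RT charging (worth at most $\eta_k^2(\lambda^{RT}_n - C^{degradation}_k) \le \lambda^{RT}_n + C^{degradation}_k$ per MWh charged) are somewhat more careful than the paper's sensitivity argument, whose discharge bound ignores charging. Two small fixes: the binding-power-limit case you flag as the main obstacle is vacuous, since at the RT optimum $p^{dis,RT}_k = \eta_k v^{RT,\dagger}_{k,0} < p^{dis,DA}_k \le \overline{P}^{dis}_k$ gives $\pi^{St,RT,5}_k = 0$; and the degenerate case $p^{dis,RT}_k = 0$ (i.e.\ $v^{RT,\dagger}_{k,0} = 0$) does not hold with ``room to spare'', because there stationarity only gives $\pi^{St,RT,1}_k \ge \eta_k(\lambda^{RT}_n - C^{degradation}_k)$, so the bound requires choosing the smallest admissible multiplier (the paper avoids this case by assuming $0 < p^{dis,RT}_k$).
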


\begin{proof}
We proceed in three steps.

\textit{Step 1 (RT dispatch feasibility).}
When $\Delta v_{k,0} > 0$, the RT-realized SOC
$v^{RT,\dagger}_{k,0} = v^{DA,\dagger}_{k,0} - \Delta v_{k,0}$
is strictly smaller than the SOC assumed in the DA optimization. The maximum discharging available in real time, $\overline{p}^{dis,RT}_k$, is lower than the maximum discharging available in the day-ahead optimization, $\overline{p}^{dis,DA}_k$, if $\Delta v_{k,0}>0$. More specifically, we have: 
\vspace{-0.2cm}
\begin{align}
    & p^{dis,RT}_k
    \leq
    \overline{p}^{dis,RT}_k = \eta_k\,v^{RT,\dagger}_{k,0} =
    \eta_k\bigl(v^{DA,\dagger}_{k,0} - \Delta v_{k,0}\bigr) \notag \\
    & = \overline{p}^{dis,DA}_k - \eta_k\,\Delta v_{k,0}.
    \label{eq:discharge_constraint}
\end{align}

Hence $p^{dis,RT}_k < \overline{p}^{dis,DA}_k$ whenever $\Delta v_{k,0}>0$. For a high price period when ${p}^{dis,DA}_k = \overline{p}^{dis,DA} = \eta_k v^{DA,\dagger}_{k,0}$ , we then have $p^{dis,RT}_k < {p}^{dis,DA}_k$.

\textit{Step 2 (RT deviation cost).}
The two-settlement RT component of $TP^{St}_k$ captures the
deviation from the DA schedule:
\begin{align}
    TP^{St}_k - &\Pi^{DA}_k
    = \bigl(p^{dis,RT}_k - p^{dis,DA}_k\bigr)
       \bigl(\lambda^{RT}_{n} - C^{degradation}_k\bigr)
    \notag\\
    &\quad 
       -\bigl(p^{ch,RT}_k - p^{ch,DA}_k\bigr)
       \bigl(\lambda^{RT}_{n} + C^{degradation}_k\bigr).
    \label{eq:RT_component}
\end{align}

Considering $p^{dis,RT}_k < p^{dis,DA}_k$ (from Step~1) and
$\lambda^{RT}_{n} > C^{degradation}_k$ 
, the first term
is strictly negative. Substituting the bound
\eqref{eq:discharge_constraint} (again assuming ${p}^{dis,DA}_k = \overline{p}^{dis,DA}_k$) into \eqref{eq:RT_component}
and dropping the non-positive charging deviation term yields
\vspace{-0.2cm}
\begin{align}
    TP^{St}_k - \Pi^{DA}_k
    \;\leq\;
    -\eta_k\,\Delta v_{k,0}
    \bigl(\lambda^{RT}_{n} - C^{degradation}_k\bigr).
    \label{eq:rt_ineq}
\end{align}

\textit{Step 3 (Connection to the SOC dual variable).}
Based on LP sensitivity theory applied to the RT problem (the
real-time analogue of
\eqref{storage_profit_DA_model_1}--\eqref{storage_profit_DA_model_5}),
the marginal value of relaxing the RT-available SOC is
\vspace{-0.2cm}
\begin{align}
    \pi^{St,RT,1}_k
    =
    \frac{\partial\,q^{St,RT}_k}{\partial\,v^{RT,\dagger}_{k,0}}.
    \label{eq:dual_sensitivity}
\end{align}

When $0 < p^{dis,RT}_k < \overline{P}^{dis}_k$ the analogous of constraints \eqref{storage_profit_DA_model_4} are not binding for the RT problem and, therefore, we have 
$\pi^{St,RT,1}_k = \eta_k\bigl(\lambda^{RT}_{n} -
C^{degradation}_k\bigr)$ 
meaning that the shadow price of an additional
MWh of RT-available SOC equals the additional discharge revenue
net of degradation cost, scaled by efficiency. Substituting into
\eqref{eq:rt_ineq}:
\vspace{-0.2cm}
\begin{align}
    TP^{St}_k
    \;\leq\;
    \Pi^{DA}_k
    \;-\;
    \pi^{St,RT,1}_k\,\Delta v_{k,0}.
    \label{eq:TP_bound_final}
\end{align}

Since $\Pi^{DA}_k$ is finite and for $\pi^{St,RT,1}_k > 0$, the
right-hand side of \eqref{eq:TP_bound_final} is a 
decreasing function of $\Delta v_{k,0}$.  Thus, choosing
\vspace{-0.2cm}
\begin{align}
    \Delta v_{k,0}
    \;>\;
    \frac{\Pi^{DA}_k}{\pi^{St,RT,1}_k}
    \label{eq:threshold}
\end{align}
makes the right-hand side negative, so $TP^{St}_k < 0$.
Such a gap is attainable because the three drivers enumerated in
Section~\ref{sec:soc_drivers} can produce $\Delta v_{k,0}$ of
arbitrary magnitude up to $v^{DA,\dagger}_{k,0}$.
\end{proof}

\noindent
\textbf{Remark.}
Equation~\eqref{eq:TP_bound_final} makes the asymmetry with
generators precise. For a conventional generator,
$\partial\,q^{G,DA}_g / \partial\,\overline{P}^G_g \geq 0$
and capacity is identical
across settlements, so the analogue of $\Delta v_{k,0}$ is
identically zero.  For storage, $\pi^{St,RT,1}_k$ plays the
role of the ``value of SOC'' at the real-time margin, and any
positive gap $\Delta v_{k,0}$ linearly erodes the combined
profit at rate $\pi^{St,RT,1}_k$. The storage participant can
achieve a positive DA-only profit $\Pi^{DA}_k > 0$ by choosing
its DA dispatch optimally, yet still realize $TP^{St}_k < 0$
whenever $\Delta v_{k,0}$ exceeds the threshold
\eqref{eq:threshold}, which is a situation that no analogous manipulation of generator capacity can produce.
\vspace{-0.2cm}
\section{Proposed loss-reduction-oriented max rule}
\label{sec:proposed_rule}

Let $\Pi^{DA}_k + \Pi^{RT,incr}_k$ denote the standard
two-settlement profit of storage $k$, where
$\Pi^{RT,incr}_k$ captures the incremental real-time
deviations from the DA schedule. Also, let
$\Pi^{RT\text{-}only}_k(v^{RT,\dagger}_{k,0})$ denote
the optimal RT-only profit evaluated from the realized
SOC $v^{RT,\dagger}_{k,0}$. We define the proposed max-rule total profit
as:

\begin{align}
    & TP^{St,\text{new}}_k = \max \Bigl(\Pi^{DA}_k + \Pi^{RT,incr}_k,\Pi^{RT\text{-}only}_k\Bigr) \label{eq:max_rule_formal} 
\end{align}

In the next subsections, we discuss (i) the loss-reduction property of the proposed max rule, (ii) the possibility of gaming behavior, and (iii) the complementarity of the max rule with the CAISO's options under consideration to alleviate storages' shortfalls. 
\vspace{-0.2cm}
\subsection{Loss-reduction property}

The loss reduction property of the proposed max rule is formalized via the following proposition.

\begin{prop}[The max rule's loss reduction property]
\label{prop:max_rule}
    The max rule defined in \eqref{eq:max_rule_formal} weakly dominates the standard two-settlement profit calculation  in every scenario. Furthermore, when $\Pi^{RT\text{-}only}_k \geq 0$ and $\Pi^{DA}_k + \Pi^{RT,incr}_k < 0$, the max rule strictly eliminates the loss that could trigger a BCR event.
\end{prop}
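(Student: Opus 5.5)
The plan is to argue directly from the definition \eqref{eq:max_rule_formal}, since both claims follow from elementary properties of the maximum of two real numbers. First I will fix notation for a given scenario (a realization of prices $\lambda^{DA}_n$, $\lambda^{RT}_n$ and SOC levels $v^{DA,\dagger}_{k,0}$, $v^{RT,\dagger}_{k,0}$). Write $S_k \triangleq \Pi^{DA}_k + \Pi^{RT,incr}_k$ for the standard two-settlement profit, which coincides with $TP^{St}_k$ in \eqref{eq:TP_storage}, and write $R_k \triangleq \Pi^{RT\text{-}only}_k(v^{RT,\dagger}_{k,0})$. Both are finite real numbers: $S_k$ is a finite bilinear expression in bounded dispatch quantities and prices, and $R_k$ is the optimal value of a feasible, bounded LP, since $p^{ch,RT}_k=p^{dis,RT}_k=0$ with $v^{RT}_k=v^{RT,\dagger}_{k,0}\in[0,\overline{V}_k]$ is feasible and the box constraints bound the objective. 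Hence $TP^{St,\text{new}}_k=\max(S_k,R_k)$ is well defined in every scenario.

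For weak dominance, I will use $\max(a,b)\ge a$ to obtain $TP^{St,\text{new}}_k\ge S_k = TP^{St}_k$ scenario by scenario. Since the comparison is pointwise, it holds in every scenario, independently of the sign of $\Delta v_{k,0}$ and of whether the premises of Proposition~\ref{prop:negative_profit} are met. I will also note when the inequality is strict: exactly when $R_k>S_k$, for instance in the regime of \eqref{eq:TP_bound_final} with $\Delta v_{k,0}$ beyond the threshold \eqref{eq:threshold}.

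For the second claim, I will assume $R_k\ge 0$ and $S_k<0$. Then $TP^{St,\text{new}}_k\ge R_k\ge 0$. In fact, since $R_k\ge 0> S_k$, the maximum equals $R_k$. The settled profit is therefore non-negative, so there is no shortfall that a make-whole BCR payment would need to cover. The loss is removed entirely, and the improvement over the standard rule is $R_k-S_k>0$. I will also justify that $R_k\ge 0$ always holds, because the zero dispatch described above is feasible with profit $0$. So the hypothesis $R_k\ge 0$ is automatic whenever the RT-only profit is computed as the optimum of the RT analogue of \eqref{storage_profit_DA_model_1}--\eqref{storage_profit_DA_model_5}.

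The only real obstacle is interpretive rather than technical. I must make sure $\Pi^{RT\text{-}only}_k$ is read as the optimal value at the realized SOC, so that it is finite and, by the zero-dispatch argument, non-negative. I also need to state plainly that ``eliminates the loss that could trigger a BCR event'' means the settled profit is non-negative, and I will restrict the claim to the paper's no-fixed-cost profit definition.
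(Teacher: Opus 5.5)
Your argument is the same as the paper's: weak dominance is just $\max(a,b)\ge a$ applied scenario by scenario, and under $\Pi^{RT\text{-}only}_k\ge 0>\Pi^{DA}_k+\Pi^{RT,incr}_k$ the maximum equals the non-negative RT-only leg, so your proof of both claims is correct. One caution: drop the aside that $\Pi^{RT\text{-}only}_k\ge 0$ is automatic because zero dispatch is feasible. In the paper the RT-only leg is evaluated on the actual RT dispatch, including ISO-forced instructions (conceptual example 3 gives $\Pi^{RT\text{-}only}_k=-\$3000$), and the paper explicitly says the rule does not guarantee non-negative profit, so calling the hypothesis vacuous would contradict that caveat rather than strengthen the result.
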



Proposition~\ref{prop:max_rule} establishes that the max rule weakly dominates the standard two-settlement profit outcome in every scenario. Since $\max(a, b) \geq a$ for any $a, b \in
\mathbb{R}$, the storage participant is never worse off under the max rule than under the standard two-settlement profit rule alone. The key insight is that, by evaluating each market settlement
against the SOC that is \emph{actually available} at that clearing, the RT-only leg anchors the settlement
to a profit level that reflects the storage unit's true physical state. When the RT-only profit is non-negative and the standard two-settlement profit is negative (e.g, the configuration identified in Proposition~\ref{prop:negative_profit} as arising when $\Delta v_{k,0}$ exceeds the threshold \eqref{eq:threshold}) the max rule strictly eliminates the loss. When both legs are negative, the max rule
selects the less adverse outcome, reducing but not eliminating the loss.

It is, nonetheless, important to note what the max rule does \emph{not} do. As previously mentioned, it does not guarantee non-negative profit in all circumstances. If both the standard two-settlement profit and
the RT-only profit are negative, the storage participant still incurs a loss, albeit a smaller one than under the standard two-settlement profit alone. In addition, the max rule does not guarantee the same profit
as would have been earned had the SOC gap never occurred. The RT-only profit
$\Pi^{RT\text{-}only}_k(v^{RT,\dagger}_{k,0})$ can be lower than $\Pi^{DA}_k$ when the SOC gap
reduces the energy available for high-value RT discharge windows. The rule reduces or eliminates the
\emph{shortfall} but does not eliminate the \emph{opportunity cost} of the SOC reduction. Essentially, the opportunity cost is a consequence of a physical state that the market cannot undo, whereas the shortfall is a consequence of a settlement rule that can be adapted to better accommodate SOC-limited resources.
\vspace{-0.2cm}
\subsection{Can the Rule Be Gamed?}
\label{sec:discussion_gaming}

A natural concern about the max-rule \eqref{eq:max_rule_formal} is whether a strategic storage operator could exploit the $\max(\cdot,\cdot)$ operator to earn compensations above the
competitive benchmark. Since the rule selects the higher of two
market-determined quantities---the standard two-settlement profit $\Pi^{DA}_k + \Pi^{RT,incr}_k$ and the fresh
RT-only profit $\Pi^{RT\text{-}only}_k$ anchored to the
\emph{realized} SOC $v^{RT,\dagger}_{k,0}$---a gaming strategy
must either inflate $\Pi^{DA}_k + \Pi^{RT,incr}_k$ or inflate
$\Pi^{RT\text{-}only}_k$ beyond what competitive dispatch would
produce. We consider the following candidate strategies.

\subsubsection{Strategic SOC Depletion Before RT}

A storage operator might deliberately deplete its SOC between the
DA and RT clearings---for instance through self-scheduling
discharge---to engineer a low $v^{RT,\dagger}_{k,0}$ that shifts
the max in \eqref{eq:max_rule_formal} toward
$\Pi^{RT\text{-}only}_k$. This strategy is self-defeating.
Because $\Pi^{RT\text{-}only}_k$ is computed against the
\emph{realized} SOC, a lower $v^{RT,\dagger}_{k,0}$ directly
reduces the energy available for RT discharge, shrinking
$\Pi^{RT\text{-}only}_k$.
Moreover, the pre-RT discharge that depletes the SOC is itself
settled at RT prices, so the operator bears its cost immediately.
The only scenario in which deliberate depletion could be
beneficial is one in which the RT price in the high-value window
is substantially higher than anticipated at the DA stage. In
that case, however, the RT energy settlement already compensates
the operator through standard LMP-based revenues, independently
of the max rule.

\subsubsection{Strategic overbidding in DA} 

A storage operator might submit DA discharge bids and/or DA charge bids in order to maximize $\Pi^{DA}_k$,
reasoning that the term $\Pi^{DA}_k + \Pi^{RT,incr}_k$ can be somehow maximized without the storage having to actually contribute in the RT market. 
However, again this strategy would not be particularly more rewarding than the proposed rule is intended to be. Under our max rule, an undeliverable DA schedule in RT can produce a highly negative  $\Pi^{RT,incr}_k$ that can also drive $\Pi^{DA}_k + \Pi^{RT,incr}_k$ to be low and potentially negative and the storage would rely on its performance in the RT market as expected. Furthermore, the proposed rule would limit the BCR payments
that are currently delivered even when storage systems have SOCs that cannot support their DA schedules. The max rule replaces open-ended uplift payments with a capped,
market-determined quantity---the larger of the two legs---which
at least bounds the potential payment extraction. Finally, bid mitigation rules already present in CAISO's market
design constrain the most egregious forms of strategic DA
overbidding, and applying mitigation
to the DA bids that enter the max-rule computation imposes a
further check on this channel.


\vspace{-0.2cm}
\subsection{Relationship to CAISO's options under consideration to alleviate storages' shortfalls and BCR triggering}
\label{sec:discussion_caiso}

CAISO's Storage Design and Modeling working group has identified
three conceptual frameworks for redesigning real-time storage
uplift~\cite{CAISO_BCR_2025}. Next, we discuss each in relation to the proposed max rule.

\subsubsection{Option A: Mitigation-Focused Make-Whole}

Option~A, proposed by CAISO's Department of Market Monitoring (DMM), would redesign BCR rules to assume no eligibility for batteries by default and add eligibility only under specific circumstances where it is deemed appropriate, with a particular focus on local market power mitigation as a warranted driver of uplift. 
The make-whole calculation under Option~A is based on counterfactual settlement provisions developed for exceptional dispatches issued to hold state of charge, computing the difference between the resource's maximum potential RT revenues without and with the relevant market action.

The key disadvantage of Option~A is the implementation burden since it requires a computationally intensive counterfactual
calculation much more often than the current practice. On the other hand, the proposed rule can be directly integrated into the current practice to decrease the frequency of negative profits for storage systems (due to their SOC-limited nature) through the single transparent
formula \eqref{eq:max_rule_formal}, which needs no
counterfactual market re-runs.

\subsubsection{Option B: Bidding-Consistency Test}

Option~B 
would remove negative revenues (shortfalls) from the
BCR calculation when a forced buy-back or sell-back is caused
by the resource's own actions, while retaining make-whole
eligibility for shortfalls caused by ISO market
design. 
The attribution mechanism
relies on a \emph{bidding-consistency test}. A storage
resource is deemed to have bid ``consistently'' with RT
conditions if its discharge (charge) bid is at or above
(below) the RT default energy bid~(DEB), and shortfalls
arising from ``inconsistent'' bids are excluded from the
BCR calculation.

The max rule \eqref{eq:max_rule_formal} shares Option~B's
core insight that bidder-driven shortfalls should not warrant
compensation while ISO-driven shortfalls should, but differs
fundamentally in mechanism.
Option~B requires a period-by-period comparison of submitted
bids to the RT DEB---a comparison that is both data-intensive
and sensitive to the accuracy of the DEB as a proxy for true
opportunity costs.
CAISO has noted that the current real-time DEB relies on
day-ahead market outputs and may over- or under-estimate
real-time opportunity costs~\cite{CAISO_BCR_2025}, which
directly undermines the reliability of Option~B's
consistency test.
The max rule \eqref{eq:max_rule_formal} avoids this
dependency entirely. It requires only the DA profit
(already computed by the DA market software) and the
RT-only profit (computed from telemetered SOC data and
RT prices), neither of which depends on DEB accuracy.
Moreover, the inequality in \eqref{eq:max_rule_formal}
provides an unconditional non-worsening guarantee relative
to the standard two-settlement profit outcome, whereas Option~B's coverage depends
on whether the DEB correctly classifies each shortfall as
bidder-driven or ISO-driven.

\subsubsection{Option C: Full Elimination of Uplift Through
               Structural Reform}

Option~C 
would eliminate the need for
any storage uplift mechanism by removing the market-design
constraints that drive SOC divergence in the first place,
including exempting storage from local market power
mitigation, deactivating the ancillary service SOC
constraint, and revising the upper and lower economic limit
calculations. 
In exchange, storage
resources would be subject to new penalty mechanisms for
failing to deliver 
awards, and
day-ahead and real-time BCR would be eliminated entirely.

Option~C is the most structurally ambitious of the three
frameworks and would, if implemented correctly, address
the root causes of SOC divergence enumerated in
Section~\ref{sec:soc_drivers} rather than its consequences.
However, these potential benefits come with significant
regulatory and feasibility risks. Eliminating market power
mitigation for storage exposes ratepayers to the risk of
high local energy prices and may create a worse discriminatory
treatment relative to other resources. 
Furthermore, simply eliminating the ancillary service SOC
constraint without implementing full real-time 
re-optimization would be insufficient to preserve
reliability.

The max rule \eqref{eq:max_rule_formal} is best understood
as a complement to, rather than a substitute for, Option~C's
structural agenda.
Since $TP^{St,\text{new}}_k \geq \Pi^{DA}_k +
\Pi^{RT,incr}_k$ by construction, the rule delivers
immediate improvement over the standard two-settlement rule 
for storage participants under the \emph{existing} market
structure, providing relief while the more fundamental
market-design reforms envisioned in Option~C can be carefully considered and debated. 

\vspace{-0cm}
\section{Conceptual examples}
\label{sec:conceptual_examples} 

In this section, we provide three conceptual examples of how the profit for a storage system would be computed under the standard two settlement rule and under the proposed max rule. The first and second examples illustrate, respectively, {\it buy back} and {\it sell back} situations, while the third example involves both {\it buy back} and {\it sell back} circumstances within the same time window.
\vspace{-0.3cm}
\subsection{Conceptual example 1: buy back situation}
\label{sec:buy_back_example__concetual_example_1}

Let a storage asset have a power capacity of
$\overline{P}^{ch}_k = \overline{P}^{dis}_k = 10$\,MW and an energy
capacity of $\overline{V}_k = 10$\,MWh, with an ideal round-trip efficiency
$\eta_k = 1$, with DA and RT prices reported in Table \ref{tab:prices_concetual_example_1}.
The initial state of charge is $v^{DA}_{k,0} = v^{RT}_{k,0} = 0$\,MWh at the start of
both the day-ahead and real-time market optimizations.
We consider 3 time periods in this example for both markets.

\begin{table}[!bt]
    \caption{Conceptual example 1 -- day-ahead and real-time prices (\$/MWh)}
    \label{tab:prices_concetual_example_1}
        \centering
        \renewcommand{\arraystretch}{1.2}
        \begin{tabular}{lccccccc}
        \toprule
        & \multicolumn{3}{c}{\textbf{Period}} \\
        \cmidrule(lr){2-7}
        \textbf{Price} & $t=1$ & $t=2$ & $t=3$ \\
        \midrule
        $\lambda^{DA}_t$ (\$/MWh) & {\color{white}0}$75$ & {\color{white}0}$75$ & $100$  \\
        $\lambda^{RT}_t$ (\$/MWh) & $100$ & $150$ & $500$ \\
        \bottomrule
    \end{tabular}
\end{table}

\begin{table}[!bt]
    \footnotesize
    \caption{Conceptual example 1 -- DA schedule, RT actual dispatch, and SOC trajectories}
    \label{tab:dispatch_concetual_example_1}
    \centering
    \renewcommand{\arraystretch}{1.1}
    \begin{tabular}{lcccc}
        \toprule
        \textbf{Quantity} & \textbf{Init.} & $t=1$ & $t=2$ & $t=3$ \\
        \midrule
        $p^{ch,DA}_{k,t}$ (MW)  & -- & $10$ & $0$ & $0$ \\
        $p^{dis,DA}_{k,t}$ (MW) & -- &  $0$ &  $0$ &  $10$ \\
        $v^{DA}_{k,t}$ (MWh)    & $0$ & $10$ & $10$ & $0$ \\
        \midrule
        $p^{ch,RT}_{k,t}$ (MW)  & -- & $10$ & $0$ &  $0$ \\
        $p^{dis,RT}_{k,t}$ (MW) & -- &  $0$ &  $10^{\dagger}$ & $0$  \\
        $v^{RT}_{k,t}$ (MWh)    & $0$ & $10$ & $0$ & $0$ \\
        \midrule
        $\Delta p^{ch}_{k,t}$ (MW)  & -- & $0$ & $0$ & $0$ \\
        $\Delta p^{dis}_{k,t}$ (MW) & -- & $0$ & $+10$ & $-10$ \\
        \bottomrule
        \multicolumn{5}{l}{$\dagger$~Forced discharge to comply with an exceptional dispatch. 
        }
    \end{tabular}
\end{table}

In this example, as described in Table \ref{tab:dispatch_concetual_example_1}, the storage resource is scheduled to charge in the first DA period and discharge in the third one. In the RT market however, a discharge instruction is imposed in the second period due to an exceptional dispatch for needed load supply 
and, as consequence, the storage is forced to buy back energy in the last RT period.  The day-ahead, real-time as well as total profits under the standard two settlement outcome and under the proposed rule are computed as follows ignoring the degradation costs without loss of generality.
\vspace{-0.2cm}
\begin{align}
  \Pi^{DA}_k
    &= \sum_{t \in T} \lambda^{DA}_{t} (p^{dis,DA}_{k,t} - p^{ch,DA}_{k,t}) = \$250
  \label{eq:ex_DA_profit_concetual_example_1} \\
  \Pi^{RT}_k
    &= \sum_{t \in T} \lambda^{RT}_{t} (p^{dis,RT}_{k,t} - p^{ch,RT}_{k,t}) = \$500 
  \label{eq:ex_RT_profit_profit_concetual_example_1} \\ 
  \Pi^{RT,incr}_k
    &= \sum_{t \in T}
         \lambda^{RT}_t
         \bigl(\Delta p^{dis}_{k,t} - \Delta p^{ch}_{k,t}\bigr) = -\$3500
  \label{eq:RT_incr_formula_profit_concetual_example_1} \\
  TP^{St}_k &= \Pi^{DA}_k + \Pi^{RT,incr}_k = -\$3250 \label{eq:total_profit_standard_2-settlemnet_concetual_example_1}\\
  TP^{St,\text{new}}_k &= \max \Bigl(\Pi^{DA}_k + \Pi^{RT,incr}_k,\Pi^{RT\text{-}only}_k\Bigr) = \$500 \label{eq:total_profit_max-rule_concetual_example_1}
\end{align}

In this situation, the storage resource incurs in a loss according to the standard settlement rule \eqref{eq:total_profit_standard_2-settlemnet_concetual_example_1} that might trigger a substantial BCR payment. Following the proposed rule \eqref{eq:total_profit_max-rule_concetual_example_1}, nevertheless, implies in a positive profit comparable to what was expected in the DA market. 


\subsection{Conceptual example 2: sell back situation}
\label{sec:sell_back_example_concetual_example_2}

Let a storage asset have a power capacity of
$\overline{P}^{ch}_k = \overline{P}^{dis}_k = 20$\,MW and an energy
capacity of $\overline{V}_k = 20$\,MWh, with an ideal round-trip efficiency
$\eta_k = 1$, with DA and RT prices reported in Table \ref{tab:prices_concetual_example_2}. The initial state of charge is $v^{DA}_{k,0} = v^{RT}_{k,0} = 0$\,MWh at the start of
both the day-ahead and real-time market optimizations.
We consider 4 time periods in this example for both markets.

\begin{table}[!bt]
    \caption{Conceptual example 2 -- day-ahead and real-time prices (\$/MWh)}
    \label{tab:prices_concetual_example_2}
        \centering
        \renewcommand{\arraystretch}{1.2}
        \begin{tabular}{lccccccc}
        \toprule
        & \multicolumn{4}{c}{\textbf{Period}} \\
        \cmidrule(lr){2-7}
        \textbf{Price} & $t=1$ & $t=2$ & $t=3$ & $t=4$ \\
        \midrule
        $\lambda^{DA}_t$ (\$/MWh) & {\color{white}0}$50$ & {\color{white}0}$70$ & {\color{white}0}$50$ & $150$  \\
        $\lambda^{RT}_t$ (\$/MWh) & $180$ & $320$ & $100$ & $400$ \\
        \bottomrule
    \end{tabular}
\end{table}

\begin{table}[!bt]
    \caption{Conceptual example 2 -- DA schedule, RT actual dispatch, and SOC trajectories}
    \label{tab:dispatch_concetual_example_2}
    \centering
    \renewcommand{\arraystretch}{1.1}
    \begin{tabular}{lccccc}
        \toprule
        \textbf{Quantity} & \textbf{Init.} & $t=1$ & $t=2$ & $t=3$ & $t=4$ \\
        \midrule
        $p^{ch,DA}_{k,t}$ (MW)  & -- & $10$ & $0$ & $10$ & $0$ \\
        $p^{dis,DA}_{k,t}$ (MW) & -- &  $0$ &  $0$ &  $0$ & $20$ \\
        $v^{DA}_{k,t}$ (MWh)    & $0$ & $10$ & $10$ & $20$ & $0$ \\
        \midrule
        $p^{ch,RT}_{k,t}$ (MW)  & -- & $10$ & $10^{\dagger}$ &  $0$ & $0$ \\
        $p^{dis,RT}_{k,t}$ (MW) & -- &  $0$ &  $0$ & $0$ & $20$  \\
        $v^{RT}_{k,t}$ (MWh)    & $0$ & $10$ & $20$ & $20$ & $0$ \\
        \midrule
        $\Delta p^{ch}_{k,t}$ (MW)  & -- & $0$ & $+10$ & $-10$ & $0$ \\
        $\Delta p^{dis}_{k,t}$ (MW) & -- & $0$ & $0$ & $0$ & $0$ \\
        \bottomrule
        \multicolumn{6}{l}{$\dagger$~Forced charge to comply with an exceptional dispatch. 
        }
    \end{tabular}
\end{table}

In this example, as described in Table \ref{tab:dispatch_concetual_example_2}, the storage resource is scheduled to charge in the first and third DA periods and discharge in the fourth. In the RT market, however, the storage is instructed to charge in the second period to comply with an exceptional dispatch. This exceptional dispatch ensures availability for a potential higher discharge from the third period onward, consequently forcing the resource to sell back energy in the third period. The day-ahead, real-time as well as total profits under the standard two settlement outcome and under the proposed rule are computed as follows.

\begin{align}
  \Pi^{DA}_k
    &= \sum_{t \in T} \lambda^{DA}_{t} (p^{dis,DA}_{k,t} - p^{ch,DA}_{k,t}) = \$2000
  \label{eq:ex_DA_profit_profit_concetual_example_2} \\
  \Pi^{RT}_k
    &= \sum_{t \in T} \lambda^{RT}_{t} (p^{dis,RT}_{k,t} - p^{ch,RT}_{k,t}) = \$3000
  \label{eq:ex_RT_profit_profit_concetual_example_2} \\ 
  \Pi^{RT,incr}_k
    &= \sum_{t \in T}
         \lambda^{RT}_t
         \bigl(\Delta p^{dis}_{k,t} - \Delta p^{ch}_{k,t}\bigr) = -\$2200
  \label{eq:RT_incr_formula_profit_concetual_example_2} \\
  TP^{St}_k &= \Pi^{DA}_k + \Pi^{RT,incr}_k = -\$200 \label{eq:total_profit_standard_2-settlemnet_concetual_example_2} \\
  TP^{St,\text{new}}_k &= \max \Bigl(\Pi^{DA}_k + \Pi^{RT,incr}_k,\Pi^{RT\text{-}only}_k\Bigr) = \$3000 \label{eq:total_profit_max-rule_concetual_example_2}
\end{align}

These circumstances result in a mild negative profit according to the standard settlement rule that is avoided by the proposed rule.

\vspace{-0.3cm}
\subsection{Conceptual example 3: buy and sell back situation}
\label{sec:buy_and_sell_back_example_concetual_example_3}

This example has the same setup of the conceptual example 2, with DA and RT prices reported in Table \ref{tab:prices_concetual_example_3}. The difference here is that the higher available SOC 
is deployed in real time at period 3 as reported in Table \ref{tab:dispatch_concetual_example_3} forcing the storage participant to perform sell back and buy back actions.

\begin{table}[!bt]
    \caption{Conceptual example 3 -- day-ahead and real-time prices (\$/MWh)}
    \label{tab:prices_concetual_example_3}
        \centering
        \renewcommand{\arraystretch}{1.2}
        \begin{tabular}{lccccccc}
        \toprule
        & \multicolumn{4}{c}{\textbf{Period}} \\
        \cmidrule(lr){2-7}
        \textbf{Price} & $t=1$ & $t=2$ & $t=3$ & $t=4$ \\
        \midrule
        $\lambda^{DA}_t$ (\$/MWh) & {\color{white}0}$50$ & {\color{white}0}$70$ & {\color{white}0}$50$ & $150$  \\
        $\lambda^{RT}_t$ (\$/MWh) & $180$ & $320$ & $100$ & $400$ \\
        \bottomrule
    \end{tabular}
\end{table}

\begin{table}[!bt]
    \caption{Conceptual example 3 -- DA schedule, RT actual dispatch, and SOC trajectories}
    \label{tab:dispatch_concetual_example_3}
    \centering
    \begin{threeparttable}
    \renewcommand{\arraystretch}{1.1}
    \begin{tabular}{lccccc}
        \toprule
        \textbf{Quantity} & \textbf{Init.} & $t=1$ & $t=2$ & $t=3$ & $t=4$ \\
        \midrule
        $p^{ch,DA}_{k,t}$ (MW)  & -- & $10$ & $0$ & $10$ & $0$ \\
        $p^{dis,DA}_{k,t}$ (MW) & -- &  $0$ &  $0$ &  $0$ & $20$ \\
        $v^{DA}_{k,t}$ (MWh)    & $0$ & $10$ & $10$ & $20$ & $0$ \\
        \midrule
        $p^{ch,RT}_{k,t}$ (MW)  & -- & $10$ & $10^{\dagger}$ &  $0$ & $0$ \\
        $p^{dis,RT}_{k,t}$ (MW) & -- &  $0$ &  $0$ & $20^{\ddagger}$ & $0$  \\
        $v^{RT}_{k,t}$ (MWh)    & $0$ & $10$ & $20$ & $0$ & $0$ \\
        \midrule
        $\Delta p^{ch}_{k,t}$ (MW)  & -- & $0$ & $+10$ & $-10$ & $0$ \\
        $\Delta p^{dis}_{k,t}$ (MW) & -- & $0$ & $0$ & $+20$ & $-20$ \\
        \bottomrule
    \end{tabular}
    \begin{tablenotes}
        \small
        \item[$\dagger$] Forced charge to comply with an exceptional dispatch.
        \item[$\ddagger$] Deployment of the higher SOC.
    \end{tablenotes}
    \end{threeparttable}
\end{table}

The day-ahead, real-time as well as total profits under the standard two settlement outcome and under the proposed rule are computed as follows.

\begin{align}
  \Pi^{DA}_k
    &= \sum_{t \in T} \lambda^{DA}_{t} (p^{dis,DA}_{k,t} - p^{ch,DA}_{k,t}) = \$2000
  \label{eq:ex_DA_profit_profit_concetual_example_3} \\
  \Pi^{RT}_k
    &= \sum_{t \in T} \lambda^{RT}_{t} (p^{dis,RT}_{k,t} - p^{ch,RT}_{k,t}) = -\$3000
  \label{eq:ex_RT_profit_profit_concetual_example_3} \\ 
  \Pi^{RT,incr}_k
    &= \sum_{t \in T}
         \lambda^{RT}_t
         \bigl(\Delta p^{dis}_{k,t} - \Delta p^{ch}_{k,t}\bigr) = -\$8200
  \label{eq:RT_incr_formula_profit_concetual_example_3} \\
  TP^{St}_k &= \Pi^{DA}_k + \Pi^{RT,incr}_k = -\$6200 \label{eq:total_profit_standard_2-settlemnet_concetual_example_3} \\
  TP^{St,\text{new}}_k &= \max \Bigl(\Pi^{DA}_k + \Pi^{RT,incr}_k,\Pi^{RT\text{-}only}_k\Bigr) \notag \\
  & \hspace{140pt} = -\$3000 \label{eq:total_profit_max-rule_concetual_example_3}
\end{align}

In this situation, the resulting profit is negative under both standard and proposed settlement rules. Nonetheless, the proposed rule alleviates the incurred negative profit.


\begin{figure}[tb]
    \centering
    \includegraphics[width=0.50\textwidth,height=.5\textheight,keepaspectratio]{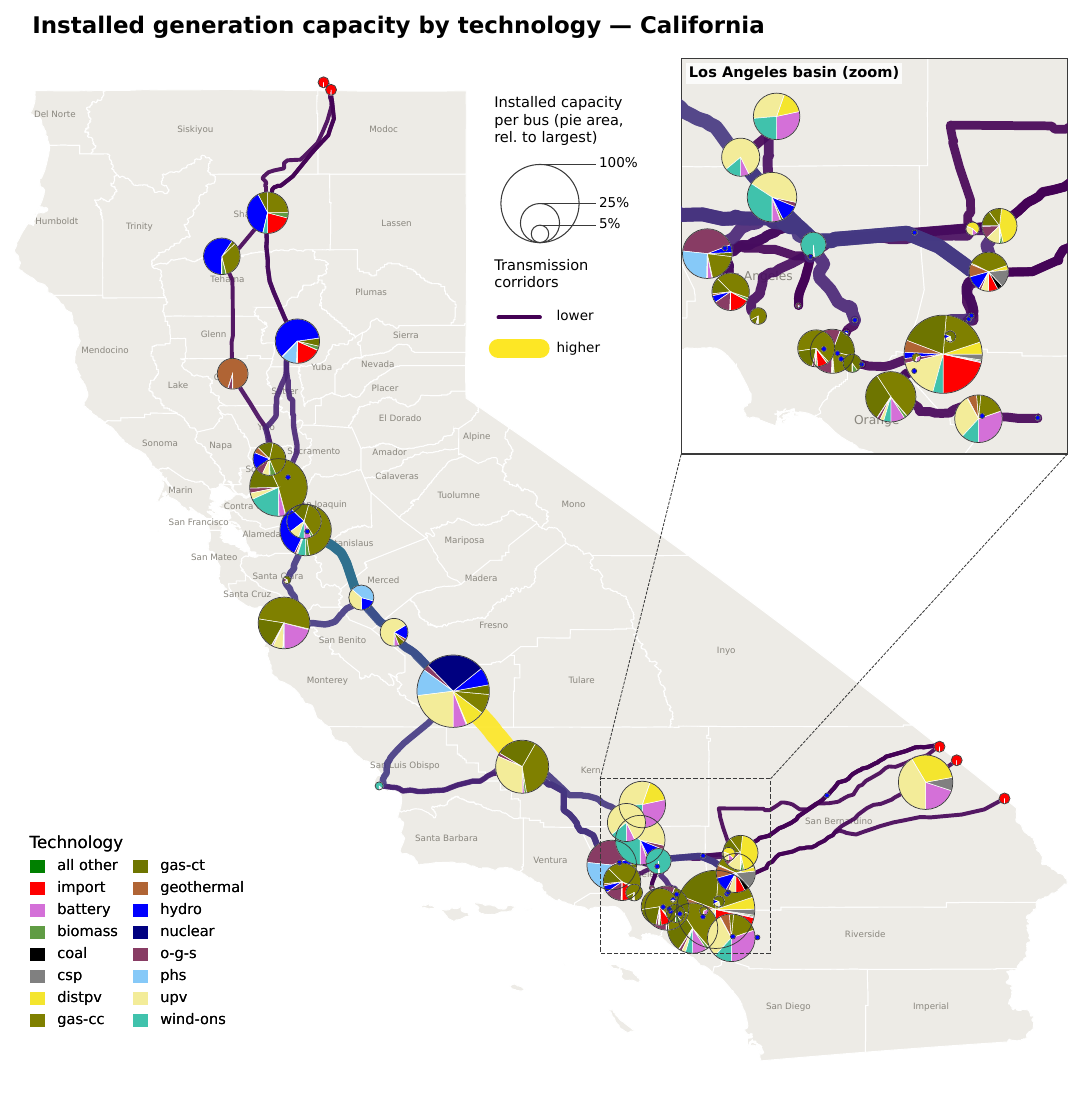}
    \caption{California 500 kV power system.}
    \label{fig:California500kVSystem}
\end{figure}

\begin{figure}[tb]
    \centering
    \includegraphics[width=0.50\textwidth,height=.5\textheight,keepaspectratio]{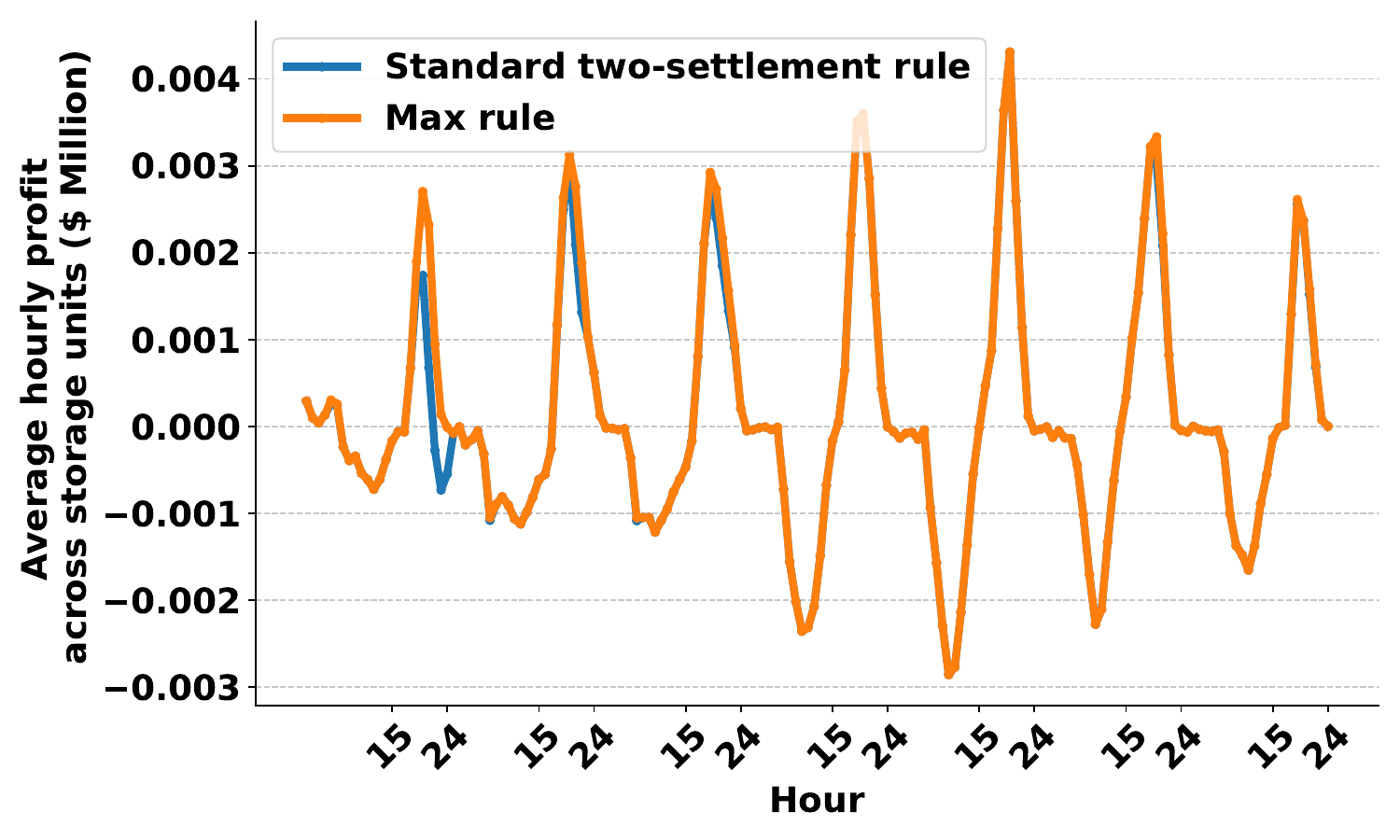}
    \caption{Storage units' average hourly profit for all hours of the normal week.}
    \label{fig:storage_hourly_profit_line_normal_week}
\end{figure}

\begin{figure}[tb]
    \centering
    \includegraphics[width=0.50\textwidth,height=.5\textheight,keepaspectratio]{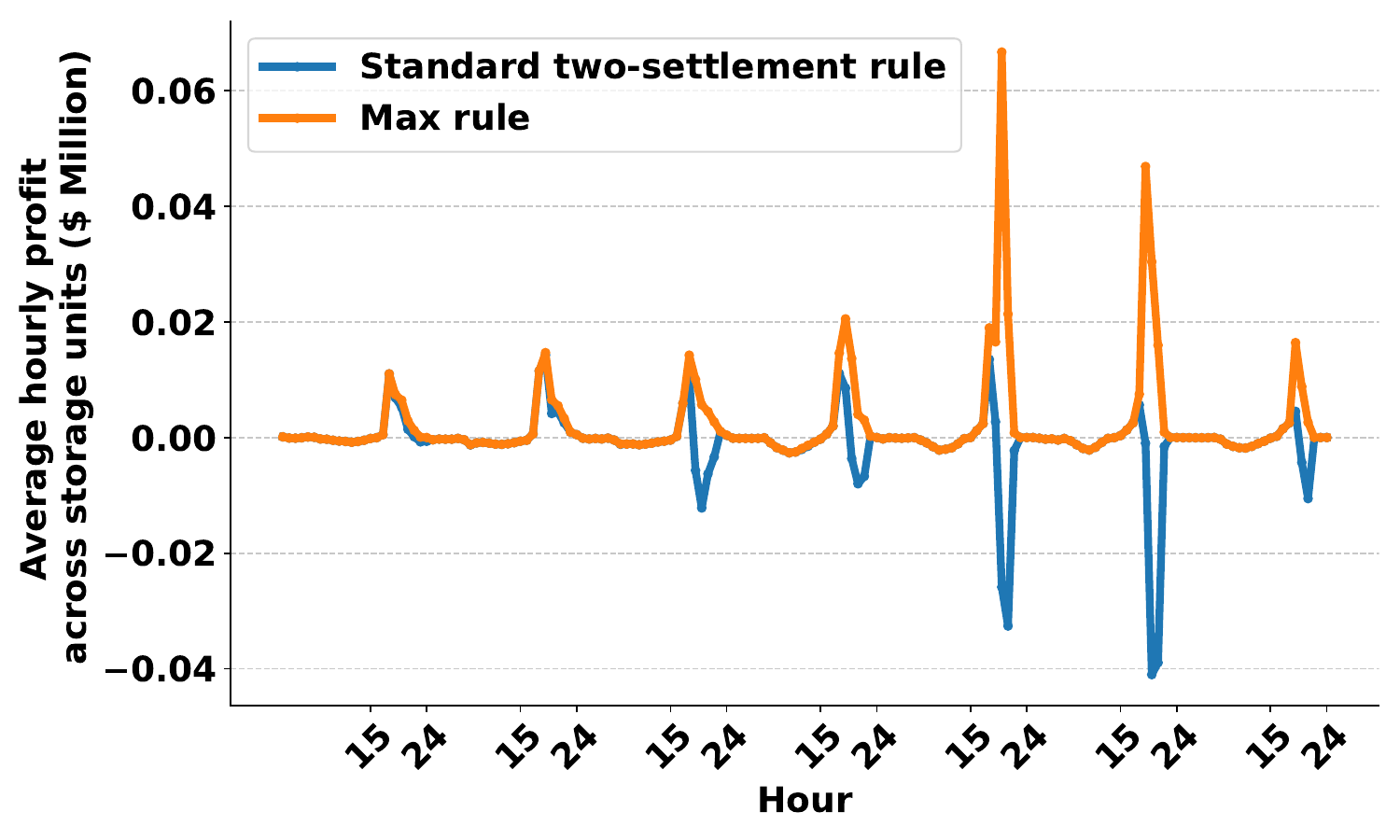}
    \caption{Storage units' average hourly profit for all hours of the critical week.}
    \label{fig:storage_hourly_profit_line_critical_week}
\end{figure}

\begin{figure}[tb]
    \centering
    \includegraphics[width=0.50\textwidth,height=.5\textheight,keepaspectratio]{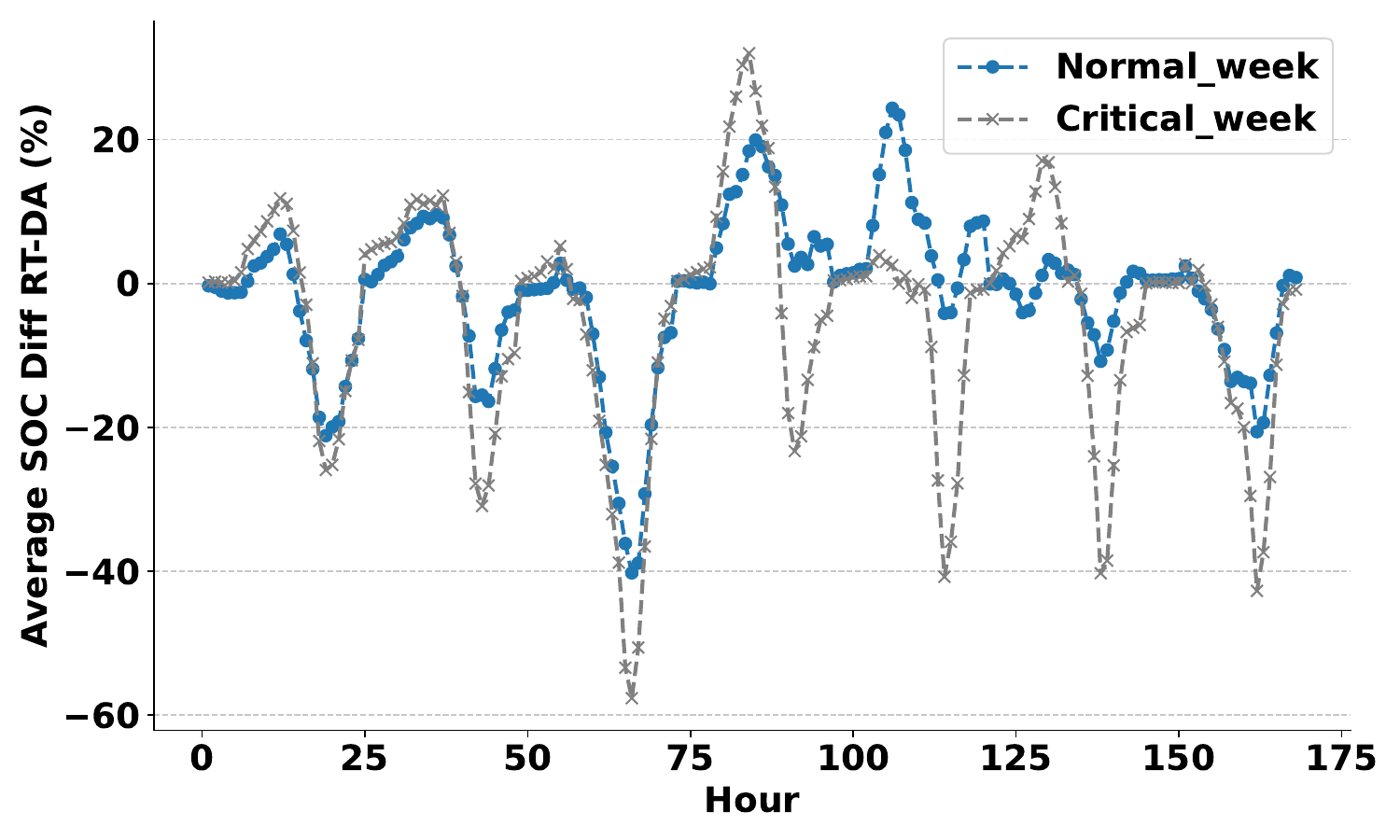}
    \caption{Average SOC differences across storage units.}
    \label{fig:average_SOC_differences}
\end{figure}

\begin{figure}[tb]
    \centering
    \includegraphics[width=0.50\textwidth,height=.5\textheight,keepaspectratio]{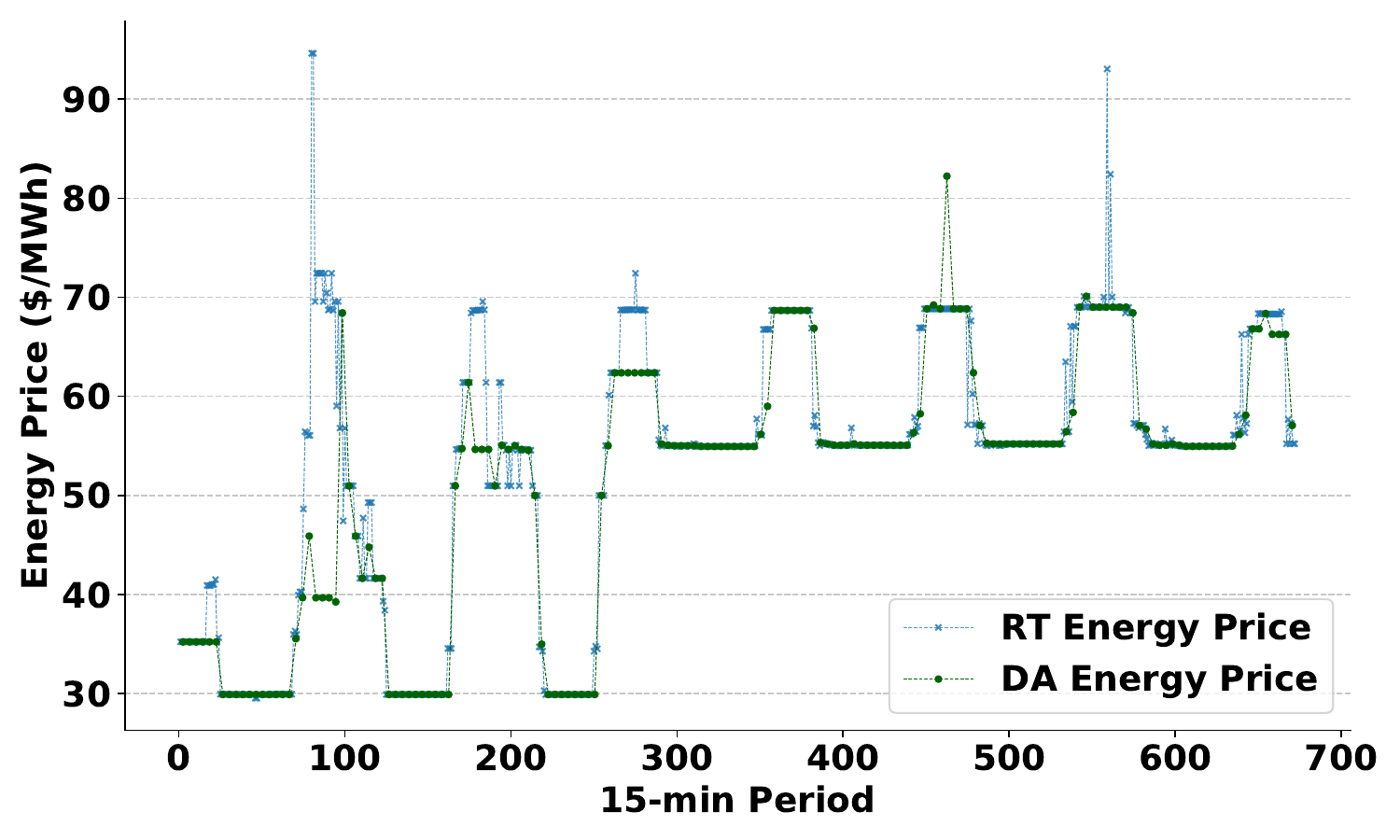}
    \caption{Average energy prices across the different nodes of the system for all hours of the normal week.}
    \label{fig:average_energy_prices_normal_week}
\end{figure}

\begin{figure}[tb]
    \centering
    \includegraphics[width=0.50\textwidth,height=.5\textheight,keepaspectratio]{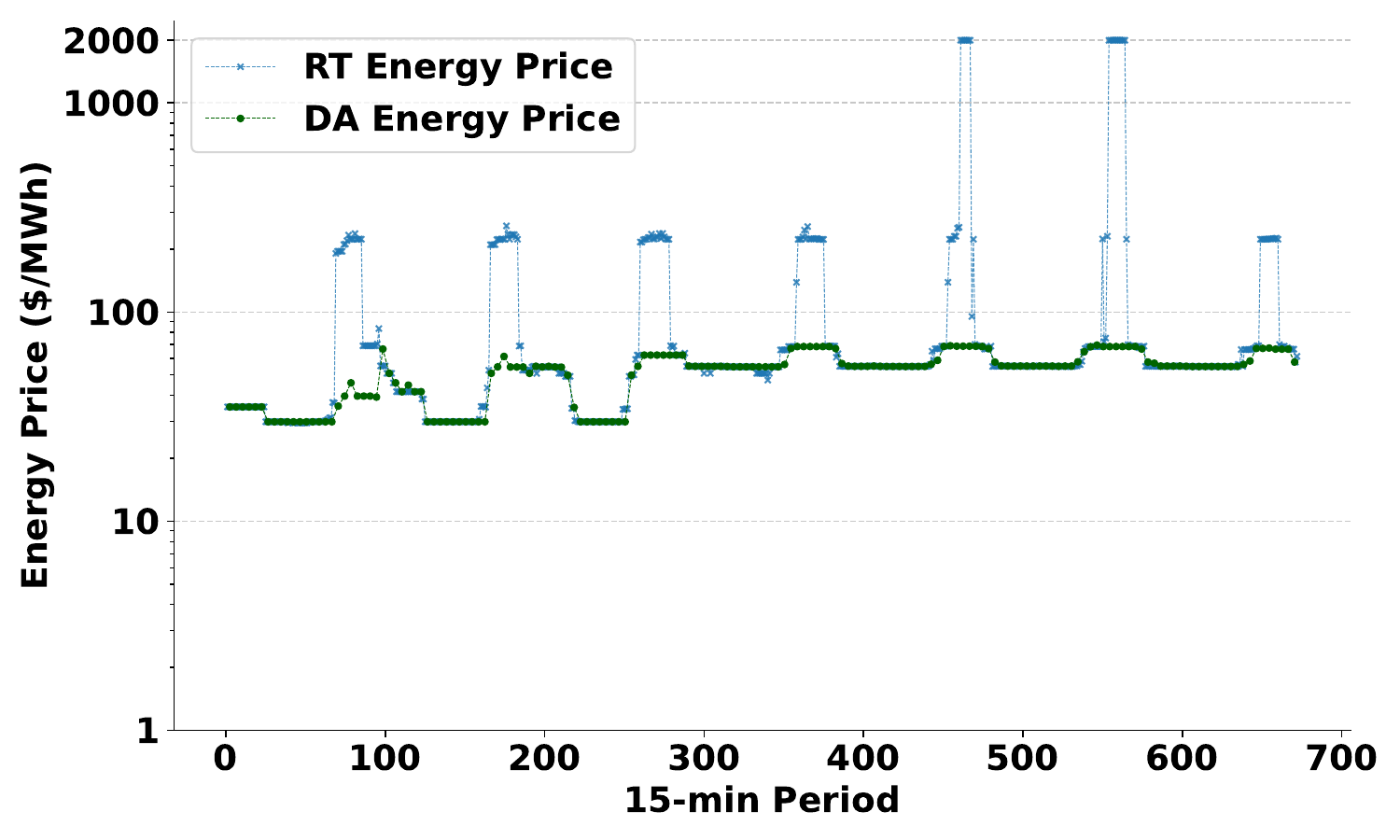}
    \caption{Average energy prices across the different nodes of the system for all hours of the critical week.}
    \label{fig:average_energy_prices_critical_week}
\end{figure}

\begin{figure}[tb]
    \centering
    \includegraphics[width=0.50\textwidth,height=.5\textheight,keepaspectratio]{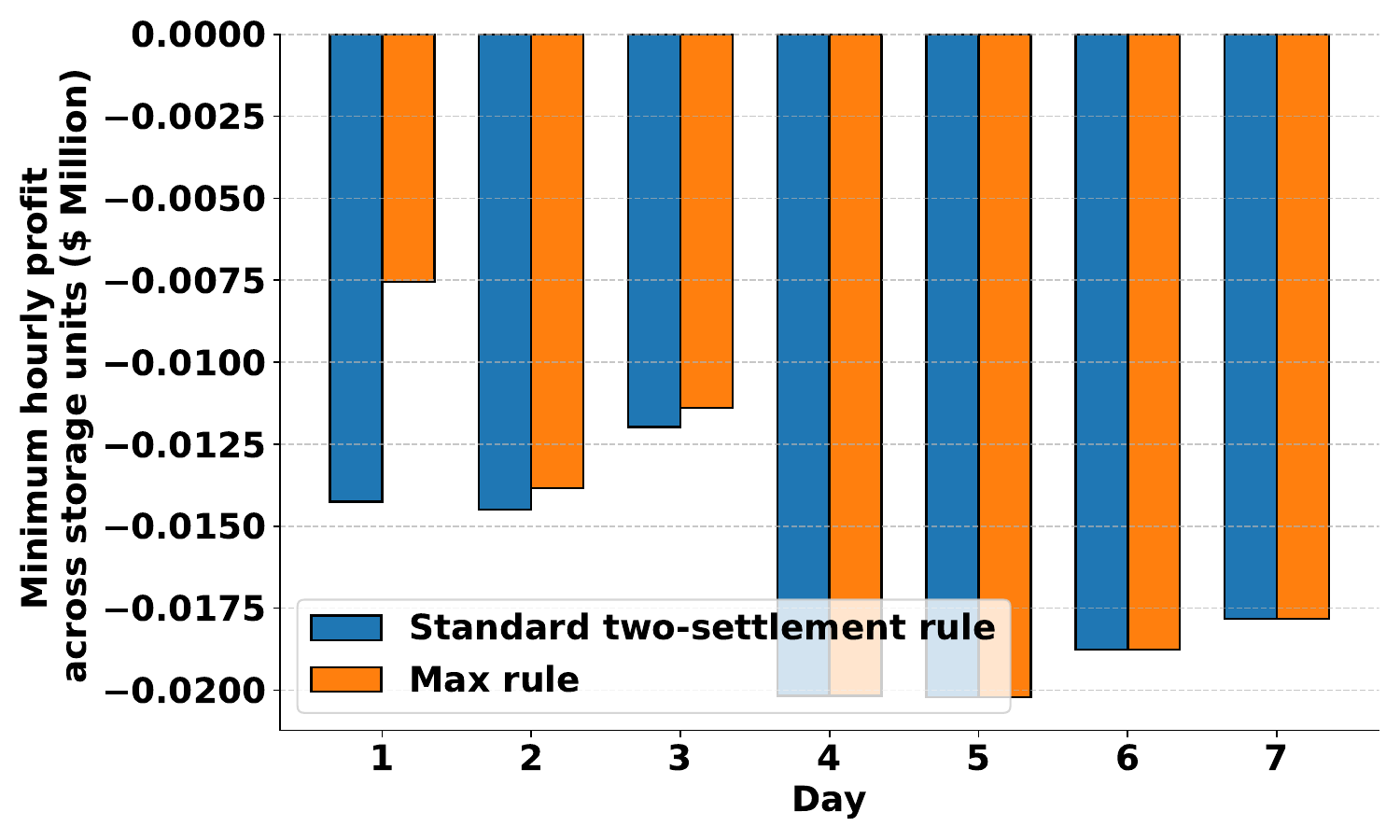}
    \caption{Storage units' minimum hourly profit for each day under normal week.}
    \label{fig:storage_min_profit_comparison_normal_week}
\end{figure}

\begin{figure}[tb]
    \centering
    \includegraphics[width=0.50\textwidth,height=.5\textheight,keepaspectratio]{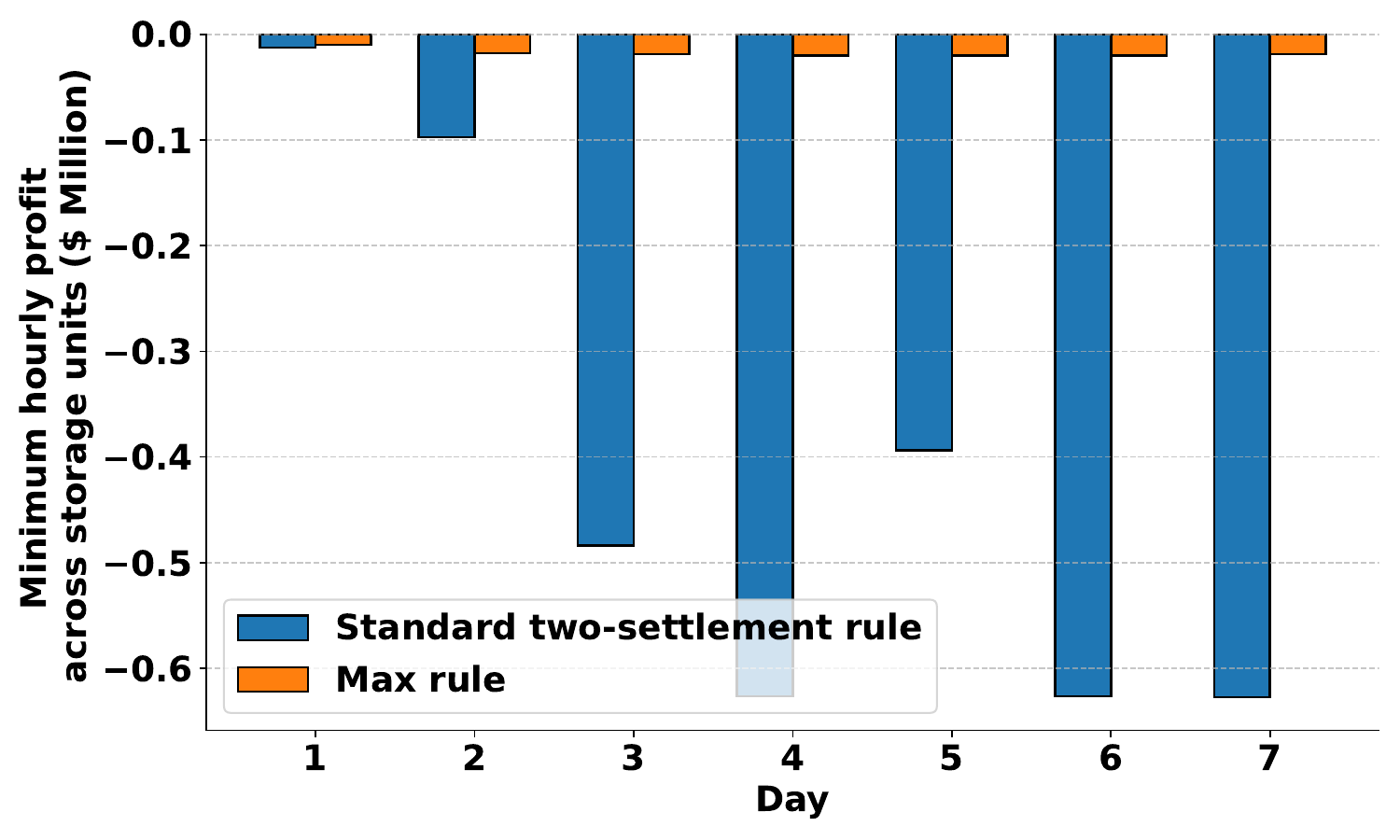}
    \caption{Storage units' minimum hourly profit for each day under critical week.}
    \label{fig:storage_min_profit_comparison_critical_week}
\end{figure}

\begin{figure}[tb]
    \centering
    \includegraphics[width=0.50\textwidth,height=.5\textheight,keepaspectratio]{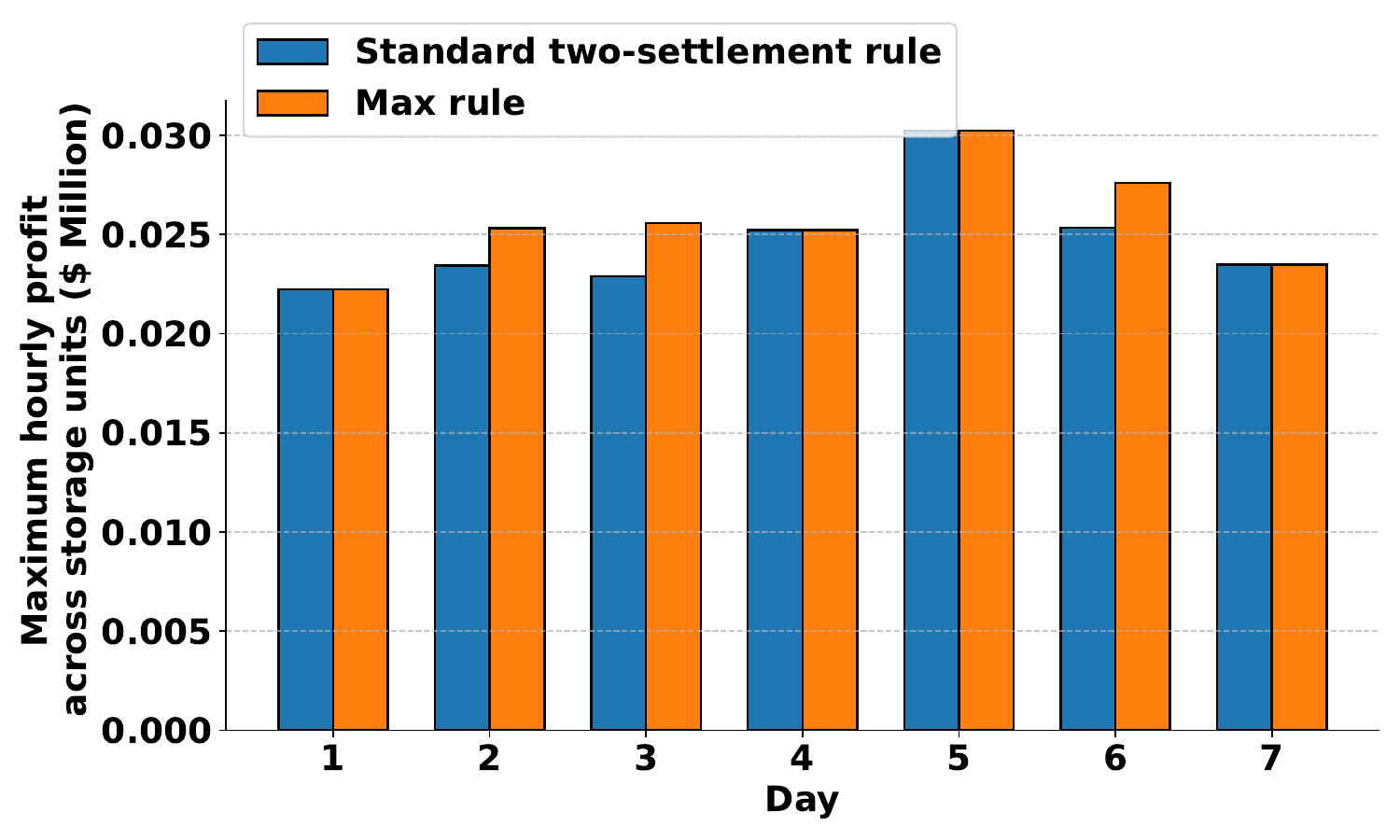}
    \caption{Storage units' maximum hourly profit for each day under normal week.}
    \label{fig:storage_max_profit_comparison_normal_week}
\end{figure}

\begin{figure}[tb]
    \centering
    \includegraphics[width=0.50\textwidth,height=.5\textheight,keepaspectratio]{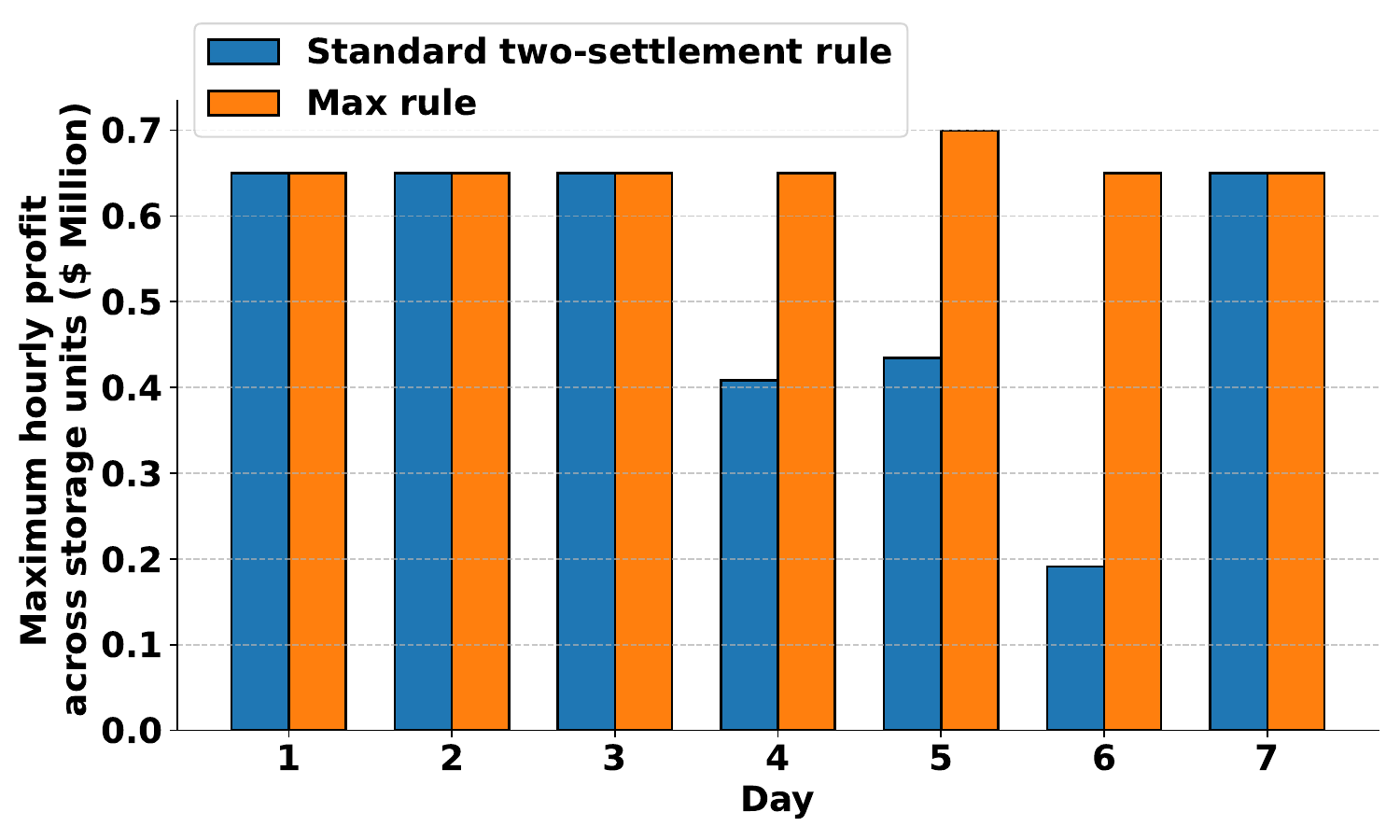}
    \caption{Storage units' maximum hourly profit for each day under critical week.}
    \label{fig:storage_max_profit_comparison_critical_week}
\end{figure}
\vspace{-0.4cm}
\section{Case study}
\label{sec:case_study}

\begin{figure}[tb]
    \centering
    \includegraphics[width=0.50\textwidth,height=.5\textheight,keepaspectratio]{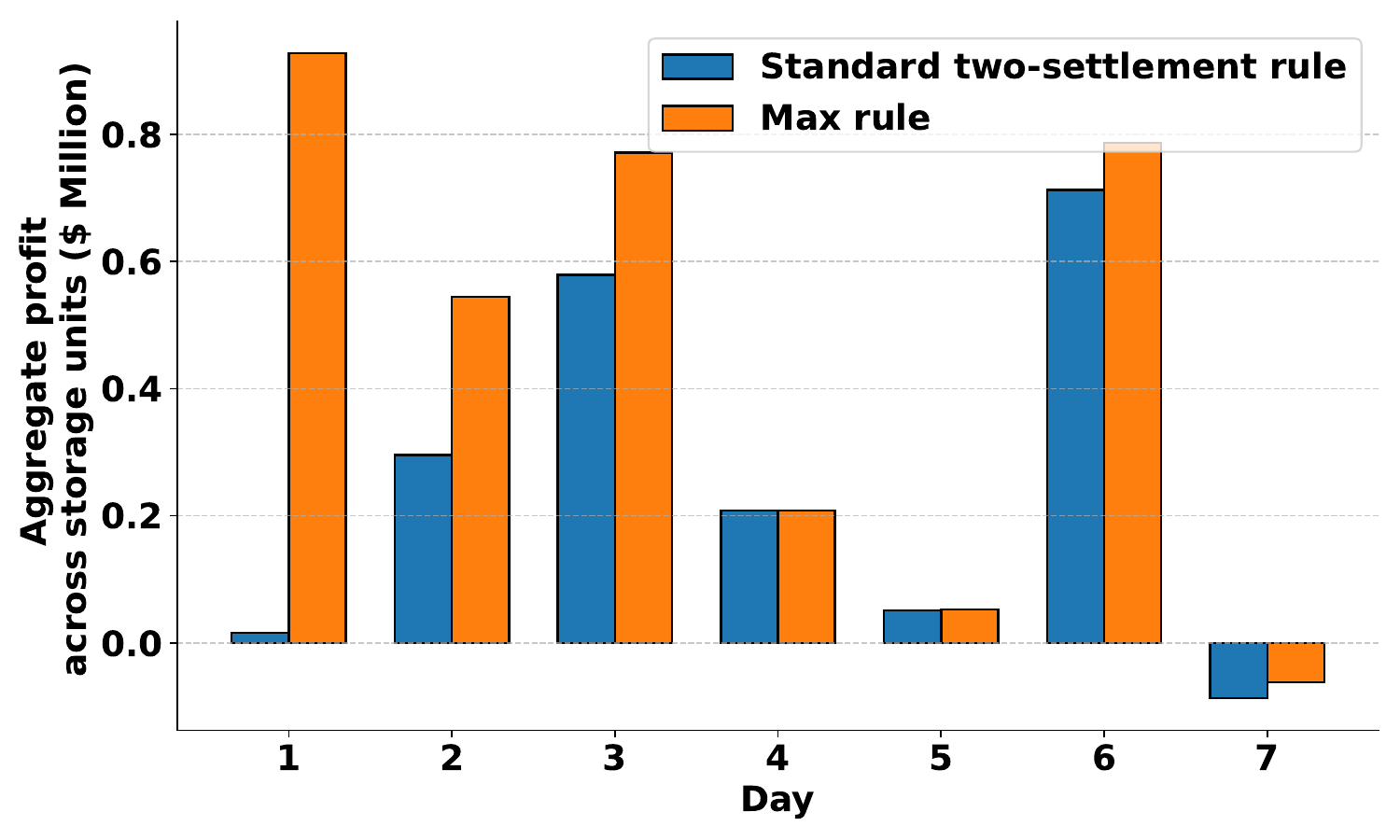}
    \caption{Storage units' daily profits under normal week.}
    \label{fig:storage_profit_daily_comparison_normal_week}
\end{figure}

\begin{figure}[tb]
    \centering
    \includegraphics[width=0.50\textwidth,height=.5\textheight,keepaspectratio]{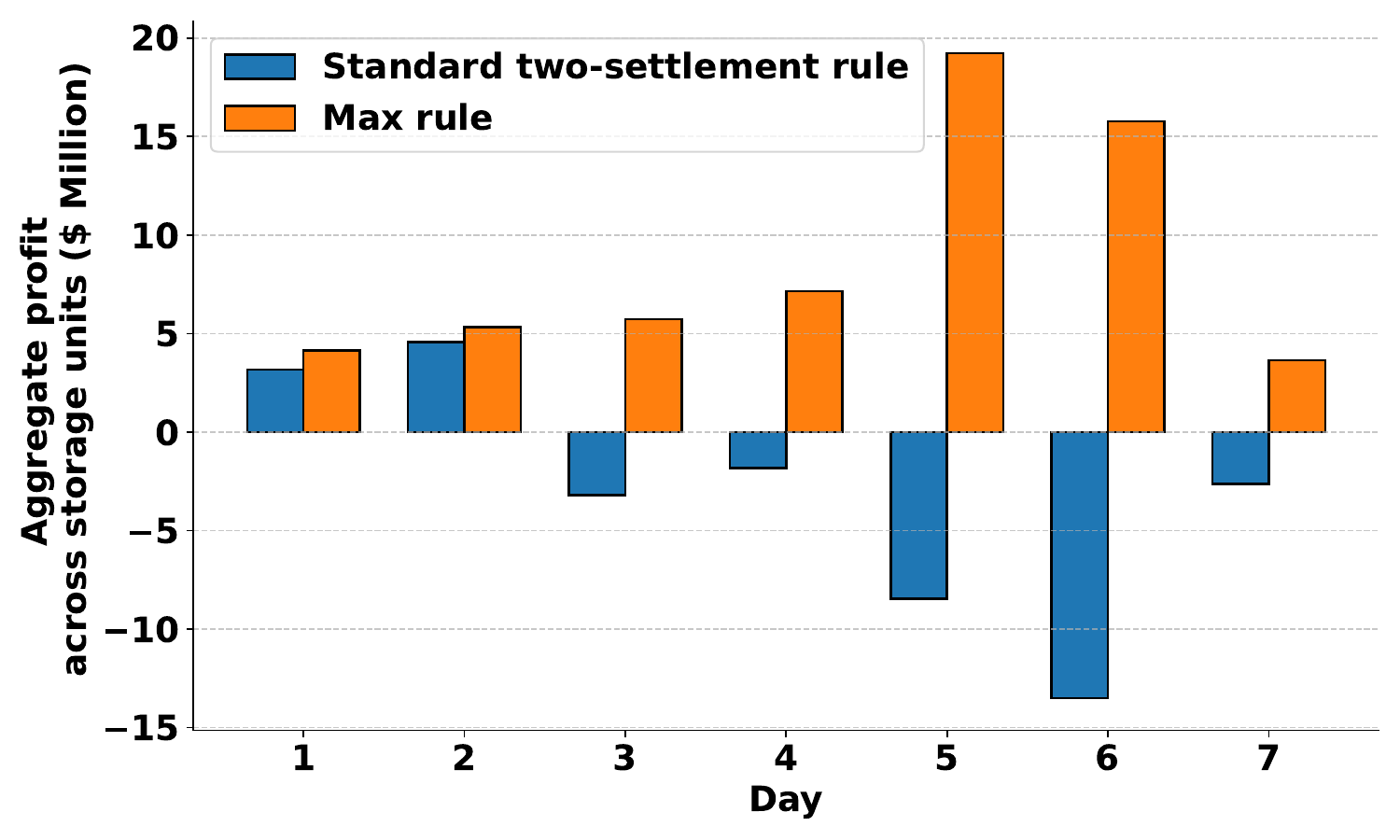}
    \caption{Storage units' daily profits under critical week.}
    \label{fig:storage_profit_daily_comparison_critical_week}
\end{figure}

\begin{figure}[!t]
    \centering
    \includegraphics[width=0.50\textwidth,height=.5\textheight,keepaspectratio]{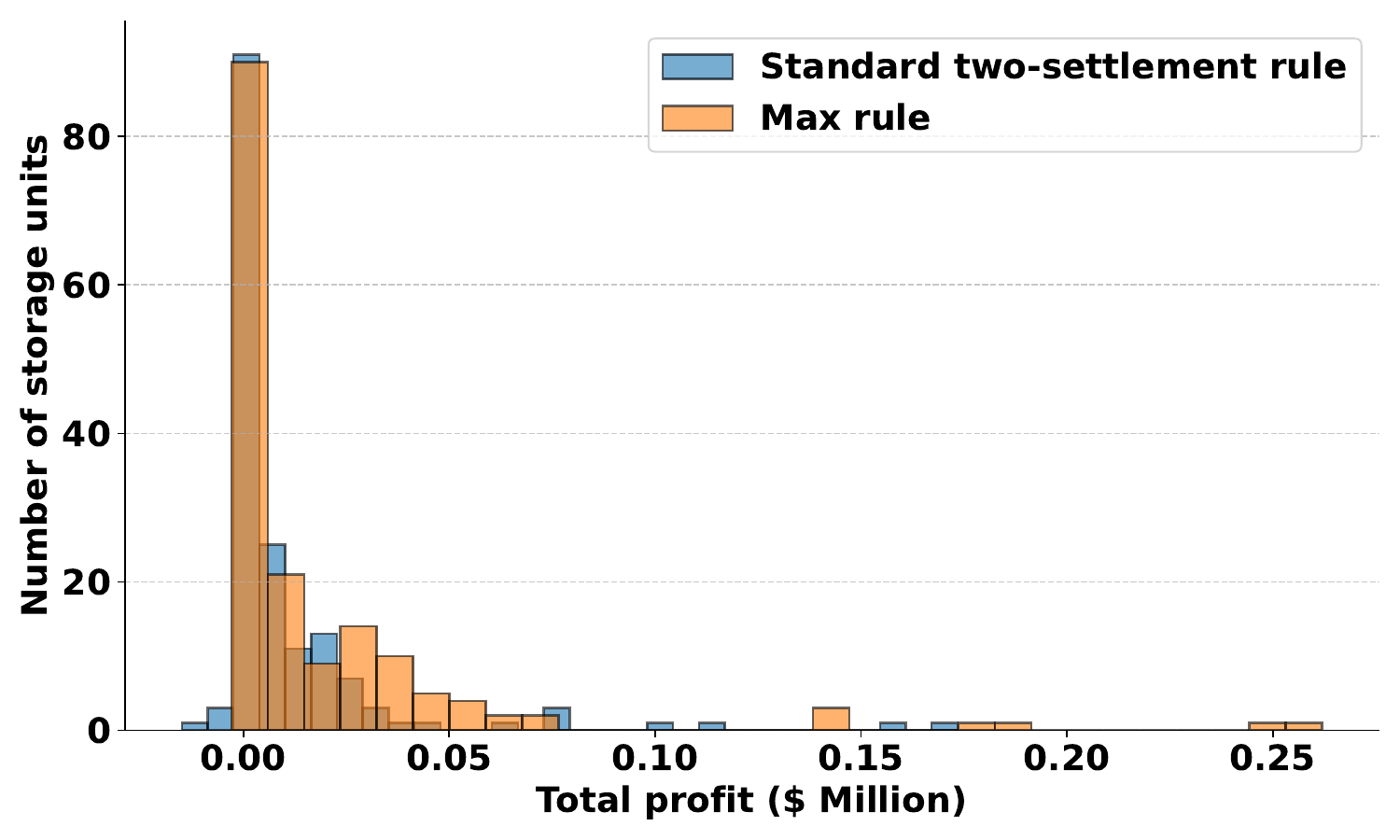}
    \caption{Storage units' profit histogram under normal week.}
    \label{fig:storage_individual_profit_distribution_normal_week}
\end{figure}

\begin{figure}[!t]
    \centering
    \includegraphics[width=0.50\textwidth,height=.5\textheight,keepaspectratio]{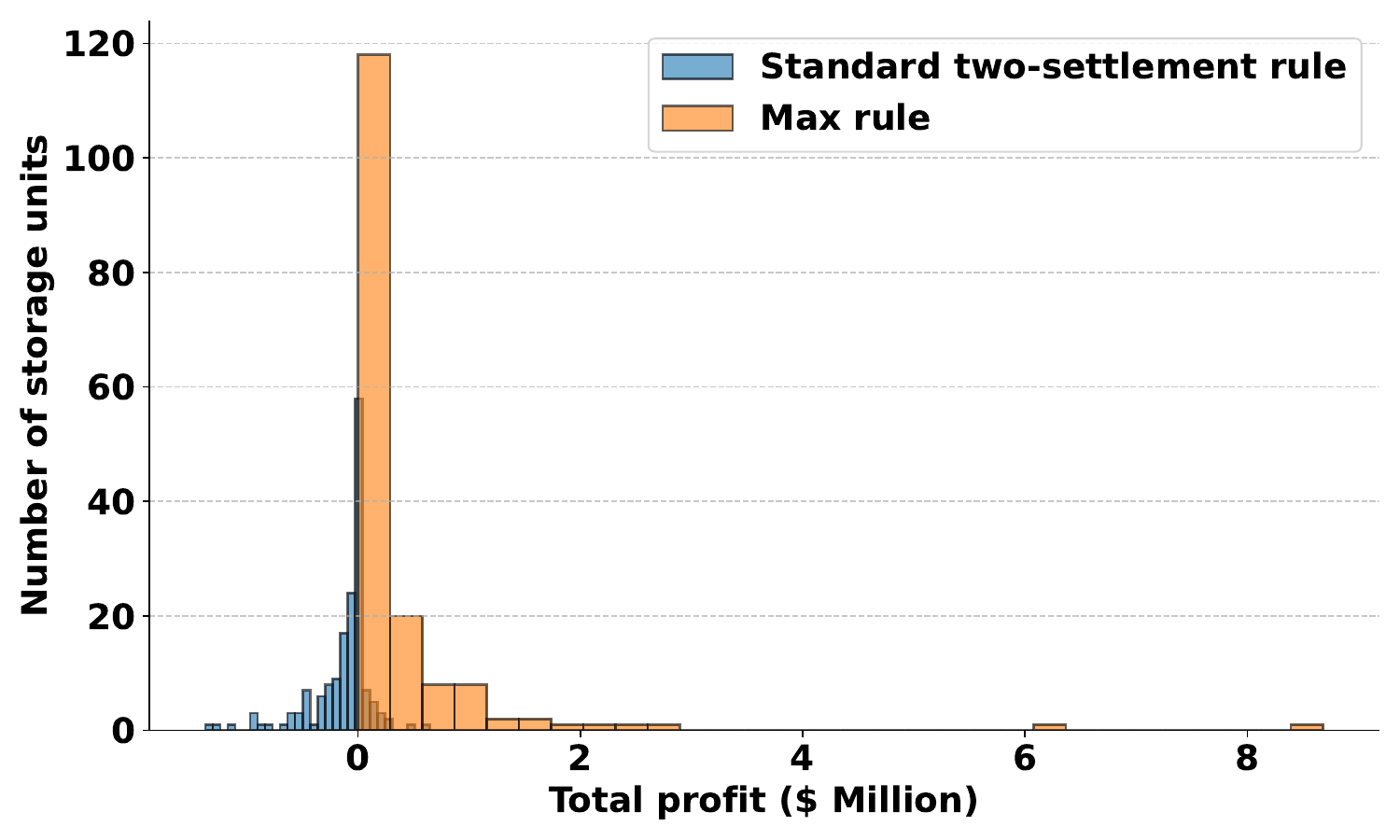}
    \caption{Storage units' profit histogram under critical week.}
    \label{fig:storage_individual_profit_distribution_critical_week}
\end{figure}

We illustrate the economic performance of storage assets under the standard two-settlement and the max rules in this case study, providing a comparison across several dimensions. To do so, we use a reduced version of the California Test System \cite{Taylor2024_CATSystem}, considering only the 500 kV transmission grid as depicted in Fig. \ref{fig:California500kVSystem}. We have updated this reduced system with (i) 2023 installed generation and storage data from the United States Energy Information Area (EIA) \cite{US_EIA_installed_cap} and (ii) 2023 demand information from CAISO (Production and Curtailments Data in \cite{CAISO_DemandData}).  Our full datasets can be found in \cite{CaseStudyRepository_2-SettlementRulePaper}. With this input data and selecting information from August 2023, we run two full week experiments for each type of compensation rule under consideration. The first one is labeled as {\it normal week} and has an average daily peak netload variation from DA to RT of 2.42\%, while the second one, hereinafter referred to as {\it critical week} has an average daily peak netload variation of 16.75\%. 
Each full week experiment consists of optimizing DA and RT market clearings for seven days. Each DA problem considers 24 hourly binding periods and other 24 look-ahead periods per day whereas each RT problem considers 1 binding and 3 advisory (look-ahead) 15-min periods, totaling 96 RT periods per day. 


The temporal evolution of the average hourly profits obtained by storage systems is captured in Figs. \ref{fig:storage_hourly_profit_line_normal_week} and \ref{fig:storage_hourly_profit_line_critical_week}, which exemplify how the standard two-settlement and the max rules can yield a similar average profit for storage units under a normal week and significantly different average profit under a critical week. Essentially, under a normal week, when the variation between DA and RT netloads is within an ordinary range, the SOC of storage units in RT is on average 20\% lower than in DA three times during the whole week compared to seven times during a critical week, with this difference also reaching -40\% three times and almost -60\% once as depicted in \ref{fig:average_SOC_differences}.    

In addition to having lower RT versus DA average SOC more often, the critical week leads to more events of higher energy prices in RT as illustrated in Figs. \ref{fig:average_energy_prices_normal_week} and \ref{fig:average_energy_prices_critical_week}. These higher price differences combined with higher SOC differences exacerbate the differences between the empirical bounds for hourly profits computed in this exercise for the two settlement rules. Figs. \ref{fig:storage_min_profit_comparison_normal_week}, \ref{fig:storage_min_profit_comparison_critical_week}, \ref{fig:storage_max_profit_comparison_normal_week}, and \ref{fig:storage_max_profit_comparison_critical_week} depict minimum and maximum hourly profits for each day across storage units under the two rules.  While the minimum and maximum values are relatively close for the two rules under a normal week, a substantial difference arises under the critical week. Under the standard rule, the most severely impacted storage units lose over \$600k during a single hour on three out of the seven days. 


The overall daily financial impact is summarized in Figs. \ref{fig:storage_profit_daily_comparison_normal_week} and \ref{fig:storage_profit_daily_comparison_critical_week}. In both scenarios, the max rule consistently provides higher aggregate compensation than the standard two-settlement rule. However, the gap between the two rules is significantly more pronounced in Fig. \ref{fig:storage_profit_daily_comparison_critical_week}, indicating that the max rule provides a critical hedge against the price volatility characteristic of a system-stressed week.


Finally, the distribution of total profits for the whole week across individual storage assets is presented in Figs. \ref{fig:storage_individual_profit_distribution_normal_week} and \ref{fig:storage_individual_profit_distribution_critical_week}. The transition from the standard rule to the max rule results in a rightward shift of the profit distribution. While the normal week shows a relatively tight distribution for both rules, the critical week's comparison illustrates a clearer distinction where the max rule profit distribution is dispersed over positive profits and the standard rule leads to several negative total profits.  

%
\vspace{-0.3cm}
\section{Conclusion}
\label{sec:conclusion}

This paper discussed a fundamental asymmetry in the compensation rule applied to different types of resources in
two-settlement electricity markets. Using linear-programming
strong duality, we established two structural results.

First, conventional generators are \emph{SOC-unlimited} resources.
Therefore, their ability to deliver a day-ahead awarded schedule in real time
is bounded only by their nameplate capacity, which is identical in
both settlements. As a consequence, the combined DA+RT profit is not going to be negative due to lack of capacity in RT.

Second, storage systems are \emph{SOC-limited} resources. Their
combined DA+RT profit depends explicitly on the gap between the SOC
available at the DA clearing stage and the SOC realized at the RT
stage. We showed formally that this gap 
can drive the combined profit of a
storage participant to negative values.

From a broader market-design perspective, the results of this
paper underscored a general principle that compensation rules
designed for SOC-unlimited resources will systematically
produce unintended outcomes when applied to SOC-limited
resources, because the two-settlement profit of the latter
depends on a state variable---the SOC---that can be altered
by market-design mechanisms between the two clearings.
The proposed max rule operationalizes
this insight in a single, transparent formula that requires no architectural change to the two-settlement structure.
\vspace{-0.8cm}
{\color{black}
\section*{Acknowledgment}

The author would like to thank Dr. Fan Zhang (Principal Engineer in the Power System Technology Development Group at the California ISO) for the insightful discussions and valuable suggestions that motivated this paper, and Dr. Miguel Heleno (Staff Scientist in the Energy Technologies Area at Lawrence Berkeley National Laboratory) for his fruitful feedback. This work was funded by the U.S. Department of Energy, Office of Electricity, under contract DE-AC02-05CH11231.
}
\vspace{-0.5cm}
\bibliographystyle{IEEEtran}
\bibliography{References}


\end{document}